\newtheorem{theorem}{Theorem}[section]
\newtheorem{corollary}[theorem]{Corollary}
\theoremstyle{definition}
\theoremstyle{remark}
\numberwithin{equation}{section}
\newcommand{\spt}[1]{\mbox{\normalfont spt}\Parans{#1}}
\newcommand{\sptBar}[2]{\overline{\mbox{\normalfont spt}}_{#1}\Parans{#2}}
\newcommand{\sptBarTwo}[2]{\overline{\mbox{\normalfont spt2}}_{#1}\Parans{#2}}
\newcommand{\Mspt}[2]{\mbox{\normalfont M2spt}_{#1}\Parans{#2}}
\newcommand{\Parans}[1]{\left(#1\right)}
\newcommand{\SBrackets}[1]{\left[#1\right]}
\newcommand{\PieceTwo}[4]
{
	\left\{
   	\begin{array}{ll}
      	#1 & #3 \\
       	#2 & #4
     	\end{array}
	\right.
}
\newcommand{\aqprod}[3]{\Parans{#1;#2}_{#3}}
\newcommand{\SB}{\overline{\mbox{\rm S}}}
\newcommand{\SBTwo}{\overline{\mbox{\rm S2}}}
\newcommand{\STwo}{\mbox{\rm S2}}
\newcommand{\Bin}[2]
{
	\left(
   	\begin{array}{c}
      	#1\\
       	#2
     	\end{array}
	\right)
}
\newcommand{\Floor}[1]{\lfloor #1 \rfloor}
\author{CHRIS JENNINGS-SHAFFER}
\address{Department of Mathematics, University of Florida\\
Gainesville, Florida 32611, USA
\endgraf cjenningsshaffer@ufl.edu}
\title{Higher order SPT functions for overpartitions, overpartitions with
smallest part even, and partitions with smallest part even and without repeated odd parts}
\begin{document}

\allowdisplaybreaks
\keywords{
Number theory,
partitions,
overpartitions,
rank moments,
crank moments,
Andrews' spt-function,
smallest parts function,
higher order spt functions
}

\begin{abstract}
We consider the symmetrized moments of three ranks and cranks, 
similar to the work of Garvan 
in \cite{Garvan2} for the rank and crank of a partition. By using Bailey
pairs and elementary rearrangements, we are able to find useful expressions
for these moments. We then deduce inequalities between the corresponding
ordinary moments. In particular we prove that the crank moment for overpartitions
is always larger than the rank moment for overpartitions,
$\overline{M}_{2k}(n) > \overline{N}_{2k}(n)$;
with recent asymptotics this was known to hold for sufficiently large values of
$n$ for each fixed $k$.
Lastly we provide higher order spt functions
for overpartitions, overpartitions with smallest part even, and
partitions with smallest part even and no repeated odds.
\end{abstract}

\maketitle

\section{Introduction}

\allowdisplaybreaks

Here we consider certain rank and crank moments for partition like functions
and how they relate to smallest parts functions. In particular we prove
inequalities between certain moments and define higher order smallest parts
functions as the difference of symmetrized moments.

We begin by looking at the ordinary partition function.
We recall a partition of $n$ is a non-increasing sequence of positive integers
that sum to $n$. We denote the number of partitions of $n$ by $p(n)$.
We see $p(4)=5$ since the partitions of $4$ are
$4$, $3+1$, $2+2$, $2+1+1$ and $1+1+1+1$.
In \cite{Andrews} Andrews defined $\spt{n}$ to be the total number of occurrences of
the smallest parts in the partitions of $n$.
Thus from the partitions of $4$, we see $\spt{4}=10$.

We recall the rank of a partition is the largest parts minus the number 
of parts. The crank of a partition is the largest part if there are no ones and
otherwise is the number of parts larger than the number of ones 
minus the number of ones. The first point of interest of the rank and crank of
a partition is that the rank gives a combinatorial explanation of the well known
congruences $p(5n+4)\equiv 0\pmod{5}$ and $p(7n+5)\equiv 0\pmod{7}$
and the crank gives a combinatorial explanation of 
$p(5n+4)\equiv 0\pmod{5}$, $p(7n+5)\equiv 0\pmod{7}$,
and $p(11n+6)\equiv 0\pmod{11}$. 
Specifically, if we group the partitions of $5n+4$ according to their rank 
(or crank) modulo $5$, we get $5$ equally sized sets and so
$p(5n+4)\equiv 0\pmod{5}$. Similarly if we group the partitions of $7n+5$ 
according to their rank (or crank) modulo $7$, we get $7$ equally sized sets.
If we group the partitions of $11n+6$ according to their rank 
modulo $11$ we do not in general get $11$ equally sized sets, however we do
get $11$ equally sized sets if we group by the crank modulo $11$.
As we will see shortly, the rank and crank have other uses as well.
We let $N(m,n)$ denote the number of partitions of
$n$ with rank $m$ and $M(m,n)$ denote the number of partitions of $n$
with crank $m$. After suitably altering the interpretations for $n=0$ and $n=1$, 
one has that
\begin{align*}
	C(z,q)
	&=
	\sum_{n=0}^\infty\sum_{m=-\infty}^\infty M(m,n)z^mq^n
	=
	\frac{\aqprod{q}{q}{\infty}}{\aqprod{zq,z^{-1}q}{q}{\infty}}
.
\end{align*}
For the rank we have
\begin{align*}
	R(z,q)
	&=
	\sum_{n=0}^\infty\sum_{m=-\infty}^\infty N(m,n)z^mq^n
	=
	\sum_{n=0}^\infty \frac{q^{n^2}}{\aqprod{zq,z^{-1}q}{q}{n}}
	\\
	&=
	\frac{1}{\aqprod{q}{q}{\infty}}
	\SBrackets{1 + \sum_{n=1}^\infty \frac{(1-z)(1-z^{-1})(-1)^n q^{n(3n+1)/2}(1+q^n)}
		{(1-zq^n)(1-z^{-1}q^n)}
	}
.
\end{align*}
Here and throughout the rest of this paper we are using the standard product
notation,
\begin{align*}
	\aqprod{a}{q}{n} 
	&=
	\prod_{j=0}^{n-1} (1-aq^j),
	\\
	\aqprod{a}{q}{\infty} 
	&=
	\prod_{j=0}^{\infty} (1-aq^j),
	\\
	\aqprod{a_1,\dots,a_k}{q}{n}
	&=
	\aqprod{a_1}{q}{n}\dots\aqprod{a_k}{q}{n},
	\\
	\aqprod{a_1,\dots,a_k}{q}{\infty}
	&=
	\aqprod{a_1}{q}{\infty}\dots\aqprod{a_k}{q}{\infty}
.
\end{align*}

We can now introduce the rank and crank moments
\begin{align*}
	N_k(n) &= \sum_{m=-\infty}^\infty m^k N(m,n),
	\\
	M_k(n) &= \sum_{m=-\infty}^\infty m^k M(m,n)
.
\end{align*}
Both of these sums are actually finite since $N(m,n)=M(m,n)=0$ for 
$|m|>n$. Also the odd moments are zero since $N(-m,n)=N(m,n)$ and
$M(-m,n)=M(m,n)$.
These moments were first considered by Atkin and Garvan in \cite{AG}.
By Andrews \cite{Andrews} $\spt{n} = np(n) - \frac{1}{2}N_2(n)$ and
by Dyson \cite{Dyson} $np(n) = \frac{1}{2}M_2(n)$, thus
\begin{align*}
	\spt{n} &= \frac{1}{2}M_2(n) - \frac{1}{2}N_2(n)
.
\end{align*}
We then see a useful way to study smallest parts functions is to consider the related
rank and crank moments.
Rather than immediately working with these moments, it has proved fruitful
to consider a symmetrized version (for examples of this see
\cite{Andrews2}, \cite{BLO1}, \cite{DixitYee}, \cite{Garvan2},  and
\cite{Mao}).
In \cite{Garvan2} Garvan used symmetrized moments of the 
rank and crank functions given by
\begin{align*}
	\eta_k(n) 
	&= 
	\sum_{m\in\mathbb{Z}} \Bin{m+\Floor{\frac{k-1}{2}}}{k} N(m,n),
	\\
	\mu_k(n) 
	&= 
	\sum_{m\in\mathbb{Z}} \Bin{m+\Floor{\frac{k-1}{2}}}{k} M(m,n)
,
\end{align*}
to define a higher order analog of the spt function given by
\begin{align*}
	\mbox{spt}_k(n) = \mu_{2k}(n) - \eta_{2k}(n)
.
\end{align*}
One can use $N(-m,n)=N(m,n)$ and $M(-m,n)=M(m,n)$ with the proof of 
Theorem 1 of \cite{Andrews2} to find that the odd symmetrized moments are also
zero.
In Theorem 4.3 of \cite{Garvan2}, Garvan found the following formulas relating 
the ordinary and symmetrized moments,
\begin{align*}
	\eta_{2k}(n) 
	&= 
	\frac{1}{(2k)!}
	\sum_{m\in\mathbb{Z}} g_k(m) N(m,n),
	\\
	\mu_{2k}(n) 
	&=
	\frac{1}{(2k)!} 
	\sum_{m\in\mathbb{Z}} g_k(m) M(m,n),
	\\
	N_{2k}(n) 
	&= 
	\sum_{j=1}^{k} (2j)! S^*(k,j) \eta_{2j}(n),
	\\
	M_{2k}(n) 
	&= 
	\sum_{j=1}^{k} (2j)! S^*(k,j) \mu_{2j}(n)
.
\end{align*}
Here 
\begin{align*}
	g_k(x) &= \prod_{j=0}^{k-1}(x^2-j^2)
\end{align*}
and the sequence $S^*(n,k)$ is defined recursively by
$S^*(n+1,k)=S^*(n,k-1)+k^2S^*(n,k)$ and the boundary conditions
$S^*(1,1)=1$, $S^*(n,k)=0$ for $k\le 0$ or $k>n$.

In particular we have $\mu_2(n) = \frac{1}{2}M_2(n)$ and
$\eta_2(n) = \frac{1}{2}N_2(n)$ so that $\mbox{spt}_1(n) = \spt{n}$. The
functions $\mbox{spt}_k(n)$ are interesting on their own, as they have a 
clever combinatorial interpretation and satisfy various congruences. 
Also in developing the functions $\mbox{spt}_k(n)$ we get information on
the rank and crank moments. 

In \cite{BKR} Bringmann, Mahlburg, and Rhoades derived asymptotics for
$M_{2k}$, $N_{2k}$, and $M_{2k}-N_{2k}$. In particular this showed that
for each $k\ge 1$, for sufficiently large $n$ one has $M_{2k}(n)-N_{2k}(n)>0$.
In considering the symmetrized moments that lead to $\mbox{spt}_k(n)$,
Garvan in \cite{Garvan2} proved that indeed $M_{2k}(n)-N_{2k}(n)>0$
for all $k\ge 1$ and all $n\ge 1$. 

In \cite{BKR2} Bringmann, Mahlburg, and Rhoades derived asymptotics for
$M^+_{k}$, $N^+_{k}$, and $M^+_{k}-N^+_{k}$, where
\begin{align*}
	N^+_k(n) &= \sum_{m=1}^\infty m^k N(m,n),
	\\
	M^+_k(n) &= \sum_{m=1}^\infty m^k M(m,n)
.
\end{align*}
We note $N_{2k}(n) = 2N^+_{2k}(n)$ and $M_{2k}(n) = 2M^+_{2k}(n)$.
In \cite{ACK} Andrews, Chan, and Kim deduced 
$M^+_{k}(n)>N^+_{k}(n)$ for all $k\ge 1$ and all $n\ge 1$.

We apply the idea of a higher order spt function to three other smallest parts functions. 
We use $\sptBar{}{n}$, the number of smallest parts in the overpartitions
of $n$, $\sptBarTwo{}{n}$ the number of smallest parts in the overpartitions
of $n$ with smallest part even, and $\Mspt{}{n}$ the number of smallest parts
in the partitions of $n$ with smallest part even and without repeated odd parts.

For each function we consider a
certain partition type function
and introduce a rank and crank. The smallest parts function will agree with
the difference of the second symmetrized moments of the crank and rank. 
We define a higher order smallest parts function by the difference of the
symmetrized moments.
The purpose of this paper is to find expressions for
the rank and crank moments, define the higher order smallest parts functions,
and deduce the higher order smallest parts functions
are non-negative. From this non-negativity we prove inequalities between
the ordinary crank and rank moments.
In Section 2 we give the statements and proofs of our Theorems.
We give combinatorial interpretations of the higher order smallest parts 
functions in Section 3.
In Section 4 we prove two congruences for $\sptBar{2}{n}$, a higher order
analog of $\sptBar{}{n}$.
All the machinery from \cite{Garvan2} can be reused for these 
purposes. In the following subsections we discuss each of the three smallest 
parts functions.

\subsection{The Number of Smallest Parts in Overpartitions}

An overpartition of $n$ is a partition of $n$ in which the first
occurrence of a part may be overlined. We denote the number of overpartitions
of $n$ by $\overline{p}(n)$. Thus while $p(4)=5$ we have instead
$\overline{p}(n)=14$ since the overpartitions of $4$
are $4$, $\overline{4}$, $3+1$, $3+\overline{1}$, $\overline{3}+1$, 
$\overline{3}+\overline{1}$, $2+2$, $\overline{2}+2$, $2+1+1$,
$2+\overline{1}+1$, $\overline{2}+1+1$, $\overline{2}+\overline{1}+1$,
$1+1+1+1$, and $\overline{1}+1+1+1$.

In \cite{BLO2} Bringmann, Lovejoy, and Osburn defined  $\sptBar{}{n}$ 
as the number of 
smallest parts in the overpartitions of $n$. 
We use the convention of only including the
overpartitions where the smallest part is not overlined.
We see then $\sptBar{}{4}=13$.

As in \cite{BLO2} and others, for an overpartition $\pi$ of $n$ we define the 
Dyson rank of $\pi$ to be the largest part minus the number of parts of $\pi$. 
We let $\overline{N}(m,n)$ denote the number of overpartitions of $n$ with Dyson rank 
equal to $m$. As in Proposition 1.1 and the proof of Proposition 3.2 of 
\cite{Lovejoy1}, the generating function for $\overline{N}(m,n)$ is given by
\begin{align}
	\overline{R}(z,q)
	&=
	\sum_{n=0}^\infty\sum_{m=-\infty}^\infty \overline{N}(m,n)z^mq^n
	= \sum_{n=0}^\infty\frac{\aqprod{-1}{q}{n}q^{n(n+1)/2}}
		{\aqprod{zq}{q}{n}\aqprod{z^{-1}q}{q}{n}}
	\\\label{OverpartitionDysonRank}
	&= \frac{\aqprod{-q}{q}{\infty}}{\aqprod{q}{q}{\infty}}
		\SBrackets{1+
			2\sum_{n=1}^\infty\frac{(1-z)(1-z^{-1})(-1)^nq^{n^2+n}}
				{(1-zq^n)(1-z^{-1}q^n)}
		}
.
\end{align}
The second equality is an application Watson's transformation. We 
recall Watson's transformation is
\begin{align*}
	&\sum_{n=0}^\infty \frac{\aqprod{aq/bc,d,e}{q}{n}(\frac{aq}{de})^n}
			{\aqprod{q,aq/b,aq/c}{q}{n}}
	\\	
	&=
	\frac{\aqprod{aq/d,aq/e}{q}{\infty}}{\aqprod{aq,aq/de}{q}{\infty}}
	\sum_{n=0}^\infty 
	\frac{\aqprod{a,\sqrt{a}q,-\sqrt{a}q,b,c,d,e}{q}{n}(aq)^{2n}(-1)^nq^{n(n-1)/2}}
		{\aqprod{q,\sqrt{a},-\sqrt{a},aq/b,aq/c,aq/d,aq/e}{q}{n}(bcde)^n}
.
\end{align*}

As in \cite{BLO2}, for an overpartition $\pi$ of $n$ we define a residual crank 
of $\pi$ by the crank of the subpartition of $\pi$ consisting of the non-overlined 
parts of $\pi$. We let $\overline{M}(m,n)$ denote the number of overpartitions of 
$n$ with this residual crank equal to $m$. The generating function for 
$\overline{M}(m,n)$ is then given by
\begin{align}\label{OverpartitionResidualCrankDef}
	\sum_{n=0}^\infty\sum_{m=-\infty}^\infty \overline{M}(m,n)z^mq^n
	&= \frac{\aqprod{-q}{q}{\infty}\aqprod{q}{q}{\infty}}
		{\aqprod{zq}{q}{\infty}\aqprod{z^{-1}q}{q}{\infty}}.
\end{align}
Of course this interpretation is not quite correct, as  
$\frac{\aqprod{q}{q}{\infty}}{\aqprod{zq,z^{-1}q}{q}{\infty}}$
does not agree at $q$ for the crank of the partition consisting of a single one. Thus
the interpretation of this residual crank is not quite correct for overpartitions
whose non-overlined parts consist of a single one. However, this is the generating
function we must use.

We have the ordinary and symmetrized moments defined by
\begin{align*}
	\overline{N}_k(n) 
	&= 
	\sum_{m\in\mathbb{Z}} m^k \overline{N}(m,n),
	\\
	\overline{M}_k(n) 
	&= 
	\sum_{m\in\mathbb{Z}} m^k \overline{M}(m,n),
	\\
	\overline{\eta}_k(n) 
	&= 
	\sum_{m\in\mathbb{Z}}\Bin{m+\Floor{\frac{k-1}{2}}}{k} \overline{N}(m,n),
	\\
	\overline{\mu}_k(n) 
	&= 
	\sum_{m\in\mathbb{Z}} \Bin{m+\Floor{\frac{k-1}{2}}}{k} \overline{M}(m,n).
\end{align*}
Again these sums are actually finite sums
and the odd moments are zero due 
to the symmetry $\overline{N}(-m,n)=\overline{N}(m,n)$ and
$\overline{M}(-m,n)=\overline{M}(m,n)$.
We find the proof of Theorem 4.3 of
\cite{Garvan2} works to give that
\begin{align*}
	\overline{\eta}_{2k}(n) 
	&= 
	\frac{1}{(2k)!}
	\sum_{m\in\mathbb{Z}} g_k(m) \overline{N}(m,n),
	\\
	\overline{\mu}_{2k}(n) 
	&= 
	\frac{1}{(2k)!}
	\sum_{m\in\mathbb{Z}} g_k(m) \overline{M}(m,n),	
	\\
	\overline{N}_{2k}(n) 
	&= 
	\sum_{j=1}^{k} (2j)! S^*(k,j) \overline{\eta}_{2j}(n)
	,
	\\
	\overline{M}_{2k}(n) 
	&= 
	\sum_{j=1}^{k} (2j)! S^*(k,j) \overline{\mu}_{2j}(n)
.
\end{align*}

Although it is not immediately apparent, similar to $\spt{n}$ we do have 
$\sptBar{}{n} = \overline{\mu}_{2}(n) - \overline{\eta}_{2}(n)$.
We then define the higher order spt function 
$\sptBar{k}{n} = 	\overline{\mu}_{2k}(n) - \overline{\eta}_{2k}(n)$. That
$\sptBar{}{n}$ is indeed the difference of the symmetrized moments 
follows by the combinatorial interpretation of the higher order 
$\sptBar{k}{n}$ in Section 3.

In Corollary \ref{CorollaryForSptBar} we find $\sptBar{k}{n}$ has the 
generating function
\begin{align*}
	\sum_{n=1}^\infty \sptBar{k}{n} q^n
	&=
	\sum_{n=1}^\infty (\overline{\mu}_{2k}(n)- \overline{\eta}_{2k}(n)) q^n
	\\
	&=
	\sum_{n_k\ge n_{k-1}\ge \dots \ge n_1 \ge 1}
	\frac{q^{n_1+n_2+\dots+n_k}}{(1-q^{n_k})^2(1-q^{n_{k-1}})^2\dots(1-q^{n_1})^2}
	\frac{\aqprod{-q^{n_1+1}}{q}{\infty}}{\aqprod{q^{n_1+1}}{q}{\infty}}
.
\end{align*}
In Corollary \ref{CorollaryMomentInequalities} we use this
to prove the inequality 
$\overline{M}_{2k}(n) > \overline{N}_{2k}(n)$ for  all $k\ge 1$ and all $n\ge 1$. 
Previously this inequality was known to hold for each fixed $k$
for sufficiently large $n$, due to the work of
Zapata Rolon \cite{Zapata} in determining the asymptotics
for $\overline{M}^+_{k}$, $\overline{N}^+_{k}$, and 
$\overline{M}^+_{k}-\overline{N}^+_{k}$. Here $\overline{M}^+_{k}$ and 
$\overline{N}^+_{k}$ are defined in the same fashion
as $M^+_{k}$ and $N^+_{k}$.
In \cite{ACKO} Andrews, Chan, Kim, and Osburn established
$\overline{N}^+_{1}(n) > \overline{N}^+_{1}(n)$. However, it is still only
conjectured that $\overline{N}^+_{k}(n) > \overline{N}^+_{k}(n)$
for all $k\ge 1$ and $n\ge 1$.

Unlike in \cite{GarvanJennings}, here $k=1,2$ as a subscript in $\sptBar{}{n}$ does not 
specify the smallest part being odd or even.

\subsection{The Number of Smallest Parts in Overpartitions with Smallest Part Even}

Next we restrict to overpartitions where the smallest part is even. We denote 
the number of overpartitions
of $n$ with smallest part even by $\overline{p2}(n)$. Thus
$\overline{p2}(n)=4$ since such overpartitions of $4$
are $4$, $\overline{4}$, $2+2$, and $\overline{2}+2$.
In \cite{BLO2} Bringmann, Lovejoy, and Osburn defined 
the associated smallest parts function $\sptBarTwo{}{n}$. As with
$\sptBar{}{n}$, we only include the overpartitions where the smallest part
is not overlined. Thus $\sptBarTwo{}{4}=3$.

We use the $M_2$-rank of an overpartition $\pi$. This rank is given by
\begin{align}
	M_2\mbox{-rank} 
	&= \left\lceil\frac{l(\pi)}{2} \right\rceil - \#(\pi) + \#(\pi_o) - \chi(\pi),
\end{align}
where $l(\pi)$ is the largest part of $\pi$, $\#(\pi)$ is the number of
parts of $\pi$,
$\#(\pi_o)$ is the number of odd non-overlined parts of $\pi$, and
$\chi(\pi) = 1$ if the largest part of $\pi$ is odd and non-overlined and 
$\chi(\pi) = 0$ otherwise. The $M_2$-rank for overpartitions was introduced by 
Lovejoy in \cite{Lovejoy3}. We let $\overline{N2}(m,n)$ denote the number of 
overpartitions of $n$ with $M_2$-rank $m$. 
Lovejoy found the generating function for $\overline{N2}$ is given by
\begin{align}
	\overline{R2}(z,q) 
	&= 
	\sum_{n=0}^\infty\sum_{m=-\infty}^\infty \overline{N2}(m,n)z^mq^n
	= 
	\sum_{n=0}^\infty q^n \frac{\aqprod{-1}{q}{2n}}
		{\aqprod{zq^2}{q^2}{n}\aqprod{z^{-1}q^2}{q^2}{n}}
	\\
	\label{OverpartitionM2Rank}
	&= \frac{\aqprod{-q}{q}{\infty}}{\aqprod{q}{q}{\infty}}
		\SBrackets{1+
			2\sum_{n=1}^\infty\frac{(1-z)(1-z^{-1})(-1)^nq^{n^2+2n}}
				{(1-zq^{2n})(1-z^{-1}q^{2n})}
		}
.
\end{align}

We also use the second residual crank from \cite{BLO2}. 
For an overpartition $\pi$ of $n$ we take 
the crank of the partition $\frac{\pi_e}{2}$ obtained by taking the 
subpartition $\pi_e$, of the even non-overlined parts of 
$\pi$, and halving each part of $\pi_e$.
We let $\overline{M2}(m,n)$ denote the number of overpartitions $\pi$ of $n$  
and such that the partition $\frac{\pi_e}{2}$ has crank $m$. 
Then the generating function for $\overline{M2}$ is given by
\begin{align}\label{OverpartitionResidual2CrankDef}
	\sum_{n=0}^\infty\sum_{m=-\infty}^\infty \overline{M2}(m,n)z^mq^n
	&= \frac{\aqprod{-q}{q}{\infty}\aqprod{q^2}{q^2}{\infty}}
		{\aqprod{q}{q^2}{\infty}\aqprod{zq^2}{q^2}{\infty}\aqprod{z^{-1}q^2}{q^2}{\infty}}.
\end{align}
Again this interpretation  fails for overpartitions 
whose only even non-overlined parts are a single two.

We have the ordinary and symmetrized moments defined by
\begin{align*}
	\overline{N2}_k(n) 
	&= 
	\sum_{m\in\mathbb{Z}} m^k \overline{N2}(m,n),
	\\
	\overline{M2}_k(n) 
	&= 
	\sum_{m\in\mathbb{Z}} m^k \overline{M2}(m,n)		
	,\\
	\overline{\eta 2}_k(n) 
	&= 
	\sum_{m\in\mathbb{Z}}\Bin{m+\Floor{\frac{k-1}{2}}}{k} \overline{N2}(m,n),
	\\
	\overline{\mu 2}_k(n) 
	&= 
	\sum_{m\in\mathbb{Z}} \Bin{m+\Floor{\frac{k-1}{2}}}{k} \overline{M2}(m,n)		
.
\end{align*}
Again these sums are finite sums, the odd moments are zero, and 
\begin{align*}
	\overline{\eta 2}_{2k}(n) 
	&= 
	\frac{1}{(2k)!}
	\sum_{m\in\mathbb{Z}} g_k(m) \overline{N2}(m,n)
	,\\
	\overline{\mu 2}_{2k}(n) 
	&= 
	\frac{1}{(2k)!}
	\sum_{m\in\mathbb{Z}} g_k(m) \overline{M2}(m,n)	
	,\\
	\overline{N2}_{2k}(n) 
	&= 
	\sum_{j=1}^{k} (2j)! S^*(k,j) \overline{\eta 2}_{2j}(n)
	,\\
	\overline{M2}_{2k}(n) 
	&= 
	\sum_{j=1}^{k} (2j)! S^*(k,j) \overline{\mu 2}_{2j}(n)
.
\end{align*}

As with $\sptBar{}{n}$, we define the higher order smallest parts function
$\sptBarTwo{k}{n} = 	\overline{\mu 2}_{2k}(n) - \overline{\eta 2}_{2k}(n)$.
Based on the combinatorial interpretations of Section 3, we have
$\sptBarTwo{}{n} = \overline{\mu 2}_{2k}(n) - \overline{\eta 2}_{2k}(n)$.
In Corollary \ref{CorollaryForSptBar2} we find a generating function for 
$\sptBarTwo{k}{n}$ to be given by
\begin{align*}
	\sum_{n=1}^\infty \sptBarTwo{k}{n} q^n
	&=
	\sum_{n=1}^\infty (\overline{\mu 2}_{2k}(n)- \overline{\eta 2}_{2k}(n)) q^n
	\\
	&=
	\sum_{n_k\ge n_{k-1}\ge \dots \ge n_1 \ge 1}
	\frac{q^{2n_1+2n_2+\dots+2n_k}}{(1-q^{2n_k})^2(1-q^{2n_{k-1}})^2\dots(1-q^{2n_1})^2}
	\frac{\aqprod{-q^{2n_1+1}}{q}{\infty}}{\aqprod{q^{2n_1+1}}{q}{\infty}}
.
\end{align*}
From this generating function, in Corollary \ref{CorollaryMomentInequalities},
we deduce that $\overline{M2}_{2k}(n) > \overline{N2}_{2k}(n)$ 
for $n=2$ and $n\ge 4$. In \cite{Mao2}, Mao derived asymptotics for 
$\overline{N2}_{2k}$ (as well as $\overline{N}_{2k}$), however we do not yet
have asymptotics for $\overline{M2}_{2k}(n)$ nor
$\overline{M2}_{2k}(n) - \overline{N2}_{2k}(n)$.

It is important to note that in \cite{LRS} Larsen, Rust, and Swisher proved a 
stronger result than $\overline{M2}_{2k}(n) > \overline{N2}_{2k}(n)$.
In particular they proved that
\begin{align*}
	\overline{M2}^+_{k}(n) > \overline{N2}^+_{k}(n),
\end{align*}
where $\overline{N2}^+_k(n)$ and $\overline{M2}_k(n)$ are defined similarly
to $N^+_k(n)$ and $M^+_k(n)$. The methods used to handle when the series are 
only over $m\ge 1$ are quite
different than the methods used here. Also they extend a result of Mao 
\cite{Mao} that $\overline{N}_{2k}>\overline{N2}_{2k}$ to
$\overline{N}^+_{k}>\overline{N2}^+_{k}$.

\subsection{The Number of Smallest Parts in Partitions with Smallest Part Even 
and without Repeated Odd Parts}

Lastly we consider partitions with smallest part even and without repeated odd 
parts. We let $p2(n)$ denote the number of such partitions of $n$.
We see $p2(4) = 2$ from the partitions $4$ and $2+2$.
In \cite{ABL} Ahlgren, Bringmann, and Lovejoy defined $\Mspt{}{n}$ 
to be the number of smallest parts in the partitions of $n$
without repeated odd parts and with smallest part even.
We see $\Mspt{}{4}=3$.

We recall the $M_2$-rank of a partition $\pi$ without repeated odd
parts is given by
\begin{align}
	M_2\mbox{-rank} = \left\lceil\frac{l(\pi)}{2} \right\rceil - \#(\pi)
,
\end{align}
where $l(\pi)$ is the largest part of $\pi$ and $\#(\pi)$ is the number of
parts of $\pi$.
The $M_2$-rank was introduced by Berkovich and Garvan
in \cite{BG2}. We let $N2(m,n)$ denote the number of partitions of $n$ with
distinct odd parts 
and $M_2$-rank $m$. By Lovejoy and Osburn \cite{LO2} the generating function 
for $N2(m,n)$, which we further rearrange as in \cite{GarvanJennings} (using 
Watson's transformation), is given by
\begin{align}
	R2(z,q) 
	&= 
	\sum_{n=0}^\infty\sum_{m=-\infty}^\infty N2(m,n)z^mq^n
	= 
	\sum_{n=0}^\infty q^{n^2}\frac{\aqprod{-q}{q^2}{n}}
		{\aqprod{zq^2}{q^2}{n}\aqprod{z^{-1}q^2}{q^2}{n}}
	\\
	&=\frac{\aqprod{-q}{q^2}{\infty}}{\aqprod{q^2}{q^2}{\infty}}
	\SBrackets{1+
		\sum_{n=1}^\infty\frac{(1-z)(1-z^{-1})(-1)^nq^{n(2n+1)}(1+q^{2n})}
			{(1-zq^{2n})(1-z^{-1}q^{2n})}
	}
.
\end{align}

We use another residual crank that was defined in \cite{GarvanJennings}.
For a partition $\pi$ of $n$ with distinct odd parts we take 
the crank of the partition $\frac{\pi_e}{2}$ obtained by taking the 
subpartition $\pi_e$, of the even parts of 
$\pi$, and halving each part of $\pi_e$.
We let $M2(m,n)$ denote the number of partitions $\pi$ of $n$ with distinct odd parts 
and such that the partition $\frac{\pi_e}{2}$ has crank $m$. 
Then the generating function for $M2(m,n)$ is given by
\begin{align}\label{M2Crank}
	\sum_{n=0}^\infty\sum_{m=-\infty}^\infty M2(m,n)z^mq^n
	&= 
		\frac{\aqprod{-q}{q^2}{\infty}\aqprod{q^2}{q^2}{\infty}}
			{\aqprod{zq^2}{q^2}{\infty}\aqprod{z^{-1}q^2}{q^2}{\infty}}.
\end{align}
Of course this interpretation is not quite correct, here it fails for partitions 
with distinct odd parts whose only even parts are a single two.

We have the ordinary and symmetrized moments defined by
\begin{align*}
	N2_k(n) 
	&= 
	\sum_{m\in\mathbb{Z}} m^k N2(m,n)
	,\\
	M2_k(n) 
	&= 
	\sum_{m\in\mathbb{Z}} m^k M2(m,n)
	,\\
	\eta 2_k(n) 
	&= 
	\sum_{m\in\mathbb{Z}} \Bin{m+\Floor{\frac{k-1}{2}}}{k} N2(m,n)
	,\\
	\mu 2_k(n) 
	&= 
	\sum_{m\in\mathbb{Z}} \Bin{m+\Floor{\frac{k-1}{2}}}{k} M2(m,n)
.
\end{align*}
Again these sums are finite sums, the odd moments are zero, and 
\begin{align*}
	\eta 2_{2k}(n) 
	&= 
	\frac{1}{(2k)!}
	\sum_{m\in\mathbb{Z}} g_k(m) N2(m,n),
	\\
	\mu 2_{2k}(n) 
	&=
	\frac{1}{(2k)!} 
	\sum_{m\in\mathbb{Z}} g_k(m) M2(m,n)
	,\\
	N2_{2k}(n) 
	&= 
	\sum_{j=1}^{k} (2j)! S^*(k,j) \eta 2_{2j}(n),
	\\
	M2_{2k}(n) 
	&= 
	\sum_{j=1}^{k} (2j)! S^*(k,j) \mu 2_{2j}(n)
.
\end{align*}

We define the higher order smallest parts function 
$\Mspt{k}{n} = \mu 2_{2k}(n) - 	\eta 2_{2k}(n)$, which in Section 3
we find does agree with $\Mspt{}{n}$ so that 
$\Mspt{1}{n}=\Mspt{}{n}$. In Corollary \ref{CorollaryForM2Spt} we find a 
generating function for $\Mspt{k}{n}$ is
\begin{align*}
	&\sum_{n=1}^\infty \Mspt{k}{n} q^n
	=
	\sum_{n=1}^\infty (\mu 2_{2k}(n)-\eta 2_{2k}(n)) q^n
	\\
	&=
	\sum_{n_k\ge n_{k-1}\ge \dots \ge n_1 \ge 1}
	\frac{q^{2n_1+2n_2+\dots+2n_k}}{(1-q^{2n_k})^2(1-q^{2n_{k-1}})^2\dots(1-q^{2n_1})^2}
	\frac{\aqprod{-q^{2n_1+1}}{q^2}{\infty}}{\aqprod{q^{2n_1+2}}{q^2}{\infty}}
.
\end{align*}
Again we use the generating function in Corollary 
\ref{CorollaryMomentInequalities} to deduce the inequality between ordinary moments,
$M2_{2k}(n) > N2_{2k}(n)$ for $n=2$ and $n\ge 4$.
No one has yet given asymptotics for these rank and crank moments.
Also no one has yet investigated the corresponding 
$M2_{k}^+$ and $N2_{k}^+$. Numerical evidence suggests
$M2_{k}^+ > N2_{k}^+$ for all $k\ge 1$ and $n\ge 4$.

\section{Theorems and Proofs}
   
For $C2(z,q)$, $\overline{C}(z,q)$, and $\overline{C2}(z,q)$ we use that
\begin{align*}
	\frac{\aqprod{q}{q}{\infty}}{\aqprod{zq,z^{-1}q}{q}{\infty}}
	&=
	\frac{1}{\aqprod{q}{q}{\infty}}
	\Parans{ 
		1 
		+ 
		\sum_{n=1}^\infty\frac{(1-z)(1-z^{-1})(-1)^nq^{n(n+1)/2}(1+q^n)}
			{(1-zq^n)(1-z^{-1}q^n)}
	}
,
\end{align*}
this is \cite[equation (7.15)]{Garvan1}. Thus
\begin{align*}
	C2(z,q)
	&=
	\frac{\aqprod{-q}{q^2}{\infty}\aqprod{q^2}{q^2}{\infty}}
	{\aqprod{zq^2}{q^2}{\infty}\aqprod{z^{-1}q^2}{q^2}{\infty}}
	\\
	&=
	\frac{\aqprod{-q}{q^2}{\infty}}{\aqprod{q^2}{q^2}{\infty}}
	\SBrackets{
		1
		+
		\sum_{n=1}^\infty\frac{(1-z)(1-z^{-1})(-1)^nq^{n(n+1)}(1+q^{2n})}
			{(1-zq^{2n})(1-z^{-1}q^{2n})}
	}
	\\
	&=
	\frac{\aqprod{-q}{q^2}{\infty}}{\aqprod{q^2}{q^2}{\infty}}
	\SBrackets{
		1
		+
		\sum_{n=1}^\infty(-1)^nq^{n(n+1)} 
		\Parans{\frac{1-z}{1-zq^{2n}} + \frac{1-z^{-1}}{1-z^{-1}q^{2n}}}
	}
	\\
	&=
	\frac{\aqprod{-q}{q^2}{\infty}}{\aqprod{q^2}{q^2}{\infty}}
	\sum_{n=-\infty}^\infty \frac{(-1)^nq^{n(n+1)}(1-z)}{1-zq^{2n}}
	.
\end{align*}
And similarly we have
\begin{align*}
	\overline{C}(z,q)
	&=
	\frac{\aqprod{-q}{q}{\infty}}{\aqprod{q}{q}{\infty}}
	\sum_{n=-\infty}^\infty \frac{(-1)^nq^{n(n+1)/2}(1-z)}{1-zq^n}
	,\\
	\overline{C2}(z,q)
	&=
	\frac{\aqprod{-q}{q}{\infty}}{\aqprod{q}{q}{\infty}}
	\sum_{n=-\infty}^\infty \frac{(-1)^nq^{n(n+1)}(1-z)}{1-zq^{2n}}
.
\end{align*}

We find similar expressions for the ranks.
\begin{align*}
	R2(z,q)
	&=
	\frac{\aqprod{-q}{q^2}{\infty}}{\aqprod{q^2}{q^2}{\infty}}
	\SBrackets{
		1
		+
		\sum_{n=1}^\infty\frac{(1-z)(1-z^{-1})(-1)^nq^{n(2n+1)}(1+q^{2n})}
			{(1-zq^{2n})(1-z^{-1}q^{2n})}
	}
	\\
	&=
	\frac{\aqprod{-q}{q^2}{\infty}}{\aqprod{q^2}{q^2}{\infty}}
	\SBrackets{
		1
		+
		\sum_{n=1}^\infty(-1)^nq^{n(2n+1)} 
		\Parans{\frac{1-z}{1-zq^{2n}} + \frac{1-z^{-1}}{1-z^{-1}q^{2n}}}
	}
	\\
	&=
	\frac{\aqprod{-q}{q^2}{\infty}}{\aqprod{q^2}{q^2}{\infty}}
	\sum_{n=-\infty}^\infty \frac{(-1)^nq^{n(2n+1)}(1-z)}{1-zq^{2n}}
	.
\end{align*}
Next we have,
\begin{align*}
	\overline{R}(z,q)
	&=
	\frac{\aqprod{-q}{q}{\infty}}{\aqprod{q}{q}{\infty}}
	\SBrackets{
		1
		+
		2\sum_{n=1}^\infty\frac{(1-z)(1-z^{-1})(-1)^nq^{n^2+n}}
			{(1-zq^{n})(1-z^{-1}q^{n})}
	}
	\\
	&=
	\frac{\aqprod{-q}{q}{\infty}}{\aqprod{q}{q}{\infty}}
	\SBrackets{
		1
		+
		2\sum_{n=1}^\infty \frac{(-1)^nq^{n^2+n}}{(1+q^n)} 
		\Parans{\frac{1-z}{1-zq^{n}} + \frac{1-z^{-1}}{1-z^{-1}q^{n}}}
	}
	\\
	&=
	2\frac{\aqprod{-q}{q}{\infty}}{\aqprod{q}{q}{\infty}}
	\sum_{n=-\infty}^\infty \frac{(-1)^nq^{n^2+n}(1-z)}{(1+q^n)(1-zq^{n})}
	.
\end{align*}
Similarly we have
\begin{align*}
	\overline{R2}(z,q)
	&=
	2\frac{\aqprod{-q}{q}{\infty}}{\aqprod{q}{q}{\infty}}
	\sum_{n=-\infty}^\infty \frac{(-1)^nq^{n^2+2n}(1-z)}{(1+q^{2n})(1-zq^{2n})}
	.
\end{align*}

Using that
\begin{align*}
	\Parans{\frac{\partial}{\partial z}}^j \frac{1-z}{1-zq^{n}}
	&=
	\frac{-j!(1-q^n)q^{n(j-1)}}{(1-zq^n)^{j+1}}
,
\end{align*}
we find that the partial derivatives of the crank and rank generating functions
are as follows,
\begin{align*}
	C2^{(j)}(z,q)
	&=
	\Parans{\frac{\partial}{\partial z}}^j C2(z,q)
	=
	\frac{-j!\aqprod{-q}{q^2}{\infty}}{\aqprod{q^2}{q^2}{\infty}}
	\sum_{n\not=0} \frac{(-1)^n q^{n(n-1)+2jn}(1-q^{2n})}{(1-zq^{2n})^{j+1}}
	,
	\\
	R2^{(j)}(z,q)
	&=
	\Parans{\frac{\partial}{\partial z}}^j R2(z,q)
	=
	\frac{-j!\aqprod{-q}{q^2}{\infty}}{\aqprod{q^2}{q^2}{\infty}}
	\sum_{n\not=0} \frac{(-1)^n q^{n(2n-1)+2jn}(1-q^{2n})}{(1-zq^{2n})^{j+1}}
	,
	\\
	\overline{C}^{(j)}(z,q)
	&=
	\Parans{\frac{\partial}{\partial z}}^j \overline{C}(z,q)
	=
	\frac{-j!\aqprod{-q}{q}{\infty}}{\aqprod{q}{q}{\infty}}
	\sum_{n\not=0} \frac{(-1)^n q^{n(n-1)/2+jn}(1-q^{n})}{(1-zq^{n})^{j+1}}
	,
	\\
	\overline{R}^{(j)}(z,q)
	&=
	\Parans{\frac{\partial}{\partial z}}^j \overline{R}(z,q)
	=
	2\frac{-j!\aqprod{-q}{q}{\infty}}{\aqprod{q}{q}{\infty}}
	\sum_{n\not=0} \frac{(-1)^n q^{n^2+jn}(1-q^{n})}{(1+q^n)(1-zq^{n})^{j+1}}
	,
	\\
	\overline{C2}^{(j)}(z,q)
	&=
	\Parans{\frac{\partial}{\partial z}}^j \overline{C2}(z,q)
	=
	\frac{-j!\aqprod{-q}{q}{\infty}}{\aqprod{q}{q}{\infty}}
	\sum_{n\not=0} \frac{(-1)^n q^{n(n-1)+2jn}(1-q^{2n})}{(1-zq^{2n})^{j+1}}
	,
	\\
	\overline{R2}^{(j)}(z,q)
	&=
	\Parans{\frac{\partial}{\partial z}}^j \overline{R2}(z,q)
	=
	2\frac{-j!\aqprod{-q}{q}{\infty}}{\aqprod{q}{q}{\infty}}
	\sum_{n\not=0} \frac{(-1)^n q^{n^2+2jn}(1-q^{2n})}{(1+q^{2n})(1-zq^{2n})^{j+1}}
.
\end{align*}

We collect all expressions for the symmetrized moments in one theorem.
Some of these have been used and proved before in the various papers about 
these moments.
\begin{theorem}\label{TheoremExpressionsForSymmetric}
For all $k\ge 1$
\begin{align*}
	\sum_{n=1}^\infty \mu 2_{2k}(n) q^n
	&=
	\frac{\aqprod{-q}{q^2}{\infty}}{\aqprod{q^2}{q^2}{\infty}}
	\sum_{n\not=0} \frac{(-1)^{n+1} q^{n(n+1)+2kn}}{(1-q^{2n})^{2k}}
	\\
	&=
	\frac{\aqprod{-q}{q^2}{\infty}}{\aqprod{q^2}{q^2}{\infty}}
	\sum_{n\ge 1} \frac{(-1)^{n+1} q^{n(n-1)+2kn}(1+q^{2n})}{(1-q^{2n})^{2k}}
	,
	\\
	\sum_{n=1}^\infty \eta 2_{2k}(n) q^n
	&=
	\frac{\aqprod{-q}{q^2}{\infty}}{\aqprod{q^2}{q^2}{\infty}}
	\sum_{n\not=0} \frac{(-1)^{n+1} q^{n(2n+1)+2kn}}{(1-q^{2n})^{2k}}
	\\
	&=
	\frac{\aqprod{-q}{q^2}{\infty}}{\aqprod{q^2}{q^2}{\infty}}
	\sum_{n\ge 1} \frac{(-1)^{n+1} q^{n(2n-1)+2kn}(1+q^{2n})}{(1-q^{2n})^{2k}}
	,
	\\
	\sum_{n=1}^\infty \overline{\mu}_{2k}(n) q^n
	&=
	\frac{\aqprod{-q}{q}{\infty}}{\aqprod{q}{q}{\infty}}
	\sum_{n\not=0} \frac{(-1)^{n+1} q^{n(n+1)/2+kn}}{(1-q^n)^{2k}}
	\\
	&=
	\frac{\aqprod{-q}{q}{\infty}}{\aqprod{q}{q}{\infty}}
	\sum_{n\ge 1} \frac{(-1)^{n+1} q^{n(n-1)/2+kn}(1+q^{n})}{(1-q^{n})^{2k}}
	,
	\\
	\sum_{n=1}^\infty \overline{\eta}_{2k}(n) q^n
	&=
	2\frac{\aqprod{-q}{q}{\infty}}{\aqprod{q}{q}{\infty}}
	\sum_{n\not=0} \frac{(-1)^{n+1} q^{n^2+n+kn}}{(1+q^n)(1-q^n)^{2k}}
	\\
	&=
	2\frac{\aqprod{-q}{q}{\infty}}{\aqprod{q}{q}{\infty}}
	\sum_{n\ge 1} \frac{(-1)^{n+1} q^{n^2+kn}}{(1-q^n)^{2k}}
	,
	\\
	\sum_{n=1}^\infty \overline{\mu 2}_{2k}(n) q^n
	&=
	\frac{\aqprod{-q}{q}{\infty}}{\aqprod{q}{q}{\infty}}
	\sum_{n\not=0} \frac{(-1)^{n+1} q^{n(n+1)+2kn}}{(1-q^{2n})^{2k}}
	\\
	&=
	\frac{\aqprod{-q}{q}{\infty}}{\aqprod{q}{q}{\infty}}
	\sum_{n\ge 1} \frac{(-1)^{n+1} q^{n(n-1)+2kn}(1+q^{2n})}{(1-q^{2n})^{2k}}
	,
	\\
	\sum_{n=1}^\infty \overline{\eta 2}_{2k}(n) q^n
	&=
	2\frac{\aqprod{-q}{q}{\infty}}{\aqprod{q}{q}{\infty}}
	\sum_{n\not=0} \frac{(-1)^{n+1} q^{n^2+2n+2kn}}{(1+q^{2n})(1-q^{2n})^{2k}}
	\\
	&=
	2\frac{\aqprod{-q}{q}{\infty}}{\aqprod{q}{q}{\infty}}
	\sum_{n\ge 1} \frac{(-1)^{n+1} q^{n^2+2kn}}{(1-q^{2n})^{2k}}
.
\end{align*}
\end{theorem}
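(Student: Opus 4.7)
The plan is to prove each of the six identities by taking a $2k$-th derivative of an appropriate generating function at $z=1$. I describe the argument for $\mu 2_{2k}(n)$ in detail; the remaining five are handled by the same steps applied to the other generating functions.

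The starting observation is the elementary identity $(2k)!\binom{m+k-1}{2k} = (m+k-1)(m+k-2)\cdots(m-k) = \partial_z^{2k} z^{m+k-1}|_{z=1}$, valid for every integer $m$. Combined with the definition $\mu 2_{2k}(n) = \sum_m \binom{m+k-1}{2k} M2(m,n)$, this yields
\[
(2k)!\sum_{n\ge 1}\mu 2_{2k}(n) q^n = \left(\frac{\partial}{\partial z}\right)^{2k}\bigl[z^{k-1}C2(z,q)\bigr]_{z=1}.
\]
I would then apply the Leibniz rule to the product $z^{k-1}C2(z,q)$ using $\partial_z^j z^{k-1}|_{z=1}=(k-1)!/(k-1-j)!$ for $0\le j\le k-1$ and $0$ otherwise, and reindex with $i = 2k-j$ to get $\sum_{i=k+1}^{2k}\binom{2k}{i}\frac{(k-1)!}{(i-k-1)!}C2^{(i)}(1,q)$. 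Substituting the formula for $C2^{(i)}(1,q)$ from the display immediately preceding the theorem and interchanging the two summations reduces the identity to the purely polynomial claim (with $y = q^{2n}$)
\[
\sum_{i=k+1}^{2k}\binom{2k}{i}\frac{(k-1)!\,i!}{(i-k-1)!}\frac{y^{i}}{(1-y)^{i}} = \frac{(2k)!\,y^{k+1}}{(1-y)^{2k}}.
\]

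This polynomial identity is the only real obstacle, but it is clean: factor $y^{k+1}/(1-y)^{k+1}$ out of the left side, reindex $j = i-k-1$, rewrite the remaining coefficient as $(2k)!\binom{k-1}{j}$, and close with the binomial evaluation $\sum_{j=0}^{k-1}\binom{k-1}{j}(y/(1-y))^j = (1-y)^{-(k-1)}$. Inserting this back with $y=q^{2n}$ and combining the $q^{n(n-1)}$ and $q^{2(k+1)n}$ factors into $q^{n(n+1)+2kn}$ produces the first stated form. The second form (sum over $n\ge 1$ with the $(1+q^{2n})$ numerator) follows by splitting $\sum_{n\ne 0}$ at $n=0$ and applying $n\to -n$ in the $n\le -1$ part; the calculation $(1-q^{-2n})^{-2k} = q^{4kn}(1-q^{2n})^{-2k}$ shifts the exponent $n(n+1)+2kn$ to $n(n-1)+2kn$ and supplies the extra $(1+q^{2n})$.

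For the other five identities I would run exactly the same argument with $C2$ replaced by $R2$, $\overline{C}$, $\overline{R}$, $\overline{C2}$, or $\overline{R2}$; the partial-derivative formulas needed are precisely those displayed just before the theorem. In the rank cases $\overline{R}$ and $\overline{R2}$ an overall factor of $2$ and a denominator factor $(1+q^n)$ or $(1+q^{2n})$ are carried through unchanged, and the latter cancels cleanly when one symmetrizes the first form via $n\to -n$ to reach the second. In every case the same polynomial identity in $y$ (with $y = q^{n}$ or $y = q^{2n}$ as dictated by the generating function) does all the combinatorial work.
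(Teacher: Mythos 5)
Your proposal is correct and follows essentially the same route as the paper: both compute $\frac{1}{(2k)!}\left(\frac{\partial}{\partial z}\right)^{2k}\left[z^{k-1}C2(z,q)\right]_{z=1}$ via the Leibniz rule, substitute the displayed formulas for $C2^{(j)}(1,q)$, and collapse the resulting finite sum with the binomial theorem $\sum_{j}\binom{k-1}{j}x^{j}=(1+x)^{k-1}$, your reindexing $i=2k-j$ and the factored form of the polynomial identity being only cosmetic differences. The passage from the sum over $n\ne 0$ to the sum over $n\ge 1$ via $n\to -n$ (with the cancellation of $(1+q^{n})$ in the rank cases) is likewise the step the paper performs, albeit silently.
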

\begin{proof}
We follow the proof for a similar expression in Theorem 2 of \cite{Andrews2}.
\begin{align*}	
	\sum_{n=1}^\infty \mu 2_{2k}(n) q^n
	&=
	\frac{1}{(2k)!}\left( \Parans{\frac{\partial}{\partial z}}^{2k} z^{k-1} C2(z,q) \right|_{z=1}
	\\
	&=
	\frac{1}{(2k)!}\sum_{j=0}^{k-1}\Bin{2k}{j}(k-1)\dots(k-j)C2^{2k-j}(1,q)
	\\
	&=
	\frac{\aqprod{-q}{q^2}{\infty}}{\aqprod{q^2}{q^2}{\infty}}
	\sum_{j=0}^{k-1}\Bin{k-1}{j}
	\sum_{n\not=0} \frac{(-1)^{n+1} q^{n(n-1)+2n(2k-j)}(1-q^{2n})}{(1-q^{2n})^{2k-j+1}}
	\\
	&=
	\frac{\aqprod{-q}{q^2}{\infty}}{\aqprod{q^2}{q^2}{\infty}}
	\sum_{n\not=0} \frac{(-1)^{n+1} q^{n(n-1)+4nk}}{(1-q^{2n})^{2k}}
	\sum_{j=0}^{k-1}\Bin{k-1}{j} (q^{-2n}(1-q^{2n}))^j
	\\
	&=
	\frac{\aqprod{-q}{q^2}{\infty}}{\aqprod{q^2}{q^2}{\infty}}
	\sum_{n\not=0} \frac{(-1)^{n+1} q^{n(n-1)+4nk}}{(1-q^{2n})^{2k}}
	\Parans{1+q^{-2n}(1-q^{2n}))}^{k-1}
	\\
	&=
	\frac{\aqprod{-q}{q^2}{\infty}}{\aqprod{q^2}{q^2}{\infty}}
	\sum_{n\not=0} \frac{(-1)^{n+1} q^{n(n+1)+2nk}}{(1-q^{2n})^{2k}}
	\\
	&=
	\frac{\aqprod{-q}{q^2}{\infty}}{\aqprod{q^2}{q^2}{\infty}}
	\sum_{n\ge 1} \frac{(-1)^{n+1} q^{n(n-1)+2kn}(1+q^{2n})}{(1-q^{2n})^{2k}}
.	
\end{align*}
We omit the proofs of the other identities, as they are near identical to the above,
but with $C2(z,q)$ replaced with $R2(z,q)$, $\overline{C}(z,q)$,
$\overline{R}(z,q)$, $\overline{C2}(z,q)$, and $\overline{R2}(z,q)$ respectively.
\end{proof}

We recall two sequences of functions $\alpha_n$ and $\beta_n$ are a Bailey pair
relative to $(a,q)$ if
\begin{align*}
	\beta_n &= \sum_{r=0}^n \frac{\alpha_r}{\aqprod{q}{q}{n-r}\aqprod{aq}{q}{n+r}}
.
\end{align*}
The following is Theorem 3.3 of \cite{Garvan2},
\begin{theorem}\label{TheoremGarvanBailey}
Suppose $\alpha_n$ and $\beta_n$ are a Bailey pair relative to $(1,q)$ and
$\alpha_0=\beta_0=1$,then
\begin{align*}
	&\sum_{n_k\ge n_{k-1}\ge \dots \ge n_1 \ge 1}
	\frac{\aqprod{q}{q}{n_1}^2 q^{n_1+n_2+\dots+n_k}\beta_{n_1}}
		{(1-q^{n_k})^2(1-q^{n_{k-1}})^2\dots(1-q^{n_1})^2}
	\\
	&=
	\sum_{n_k\ge n_{k-1}\ge \dots \ge n_1 \ge 1}
	\frac{q^{n_1+n_2+\dots+n_k}}{(1-q^{n_k})^2(1-q^{n_{k-1}})^2\dots(1-q^{n_1})^2}
	+
	\sum_{r=1}^\infty \frac{q^{kr}\alpha_r}{(1-q^{r})^{2k}}	
.
\end{align*}
\end{theorem}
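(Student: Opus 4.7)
The plan is to substitute the Bailey pair relation into the factor $\beta_{n_1}$ on the left, separate the contribution of $\alpha_0 = 1$ (which will match the first sum on the right), and reduce the theorem to an auxiliary $q$-series identity not depending on the particular Bailey pair. Using $\alpha_0 = \beta_0 = 1$, the Bailey pair condition relative to $(1,q)$ yields
\begin{align*}
\aqprod{q}{q}{n_1}^2\,\beta_{n_1} = 1 + \sum_{r=1}^{n_1}\frac{\aqprod{q}{q}{n_1}^2\,\alpha_r}{\aqprod{q}{q}{n_1-r}\aqprod{q}{q}{n_1+r}}.
\end{align*}
The ``$1$'' contribution reproduces the first sum on the right-hand side. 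Interchanging the remaining $r$-sum with the $n_i$-sums (so $r$ becomes outermost and $n_1 \ge r$) reduces the theorem to proving, for each $r \ge 1$,
\begin{align*}
\sum_{n_k \ge \dots \ge n_1 \ge r}\frac{q^{n_1 + \dots + n_k}\aqprod{q}{q}{n_1-1}^2}{(1-q^{n_2})^2 \dots (1-q^{n_k})^2\,\aqprod{q}{q}{n_1-r}\aqprod{q}{q}{n_1+r}} = \frac{q^{kr}}{(1-q^r)^{2k}}.
\end{align*}

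I would prove this auxiliary identity by induction on $k$. For the base case $k = 1$, the substitution $n_1 = r + m$ recasts the sum as
\begin{align*}
\frac{q^r\aqprod{q}{q}{r-1}^2}{\aqprod{q}{q}{2r}}\sum_{m \ge 0}\frac{\aqprod{q^r}{q}{m}^2\,q^m}{\aqprod{q}{q}{m}\aqprod{q^{2r+1}}{q}{m}},
\end{align*}
and the $q$-Gauss summation formula applied with $a = b = q^r$ and $c = q^{2r+1}$ (so $c/ab = q$) evaluates the inner basic hypergeometric series in closed form to $\aqprod{q}{q}{2r} / \aqprod{q}{q}{r}^2$. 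Plugging back and using $\aqprod{q}{q}{r} = \aqprod{q}{q}{r-1}(1-q^r)$ collapses everything to $q^r / (1-q^r)^2$, confirming the base case.

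The inductive step is the main obstacle. A direct peeling of the outermost variable $n_k$ encounters the tail $\sum_{n_k \ge N} q^{n_k}/(1-q^{n_k})^2$, which has no closed form in $N$. Expanding it as $\sum_{j \ge 1} j q^{jN}/(1-q^j)$ and interchanging order of summation leaves an inner $(k-1)$-fold sum of the same shape as the $k-1$ case of the auxiliary identity, but with the exponent $q^{n_{k-1}}$ replaced by $q^{(j+1)n_{k-1}}$. To complete the induction I would re-run the $q$-Gauss argument on this twisted inner sum to obtain a closed form depending on $j$, then sum over $j$ and verify that the extra factor $(1-q^r)^{-2}$ emerges cleanly; organizing this shifted inductive step and tracking the telescoping of the repeated $q$-Gauss evaluations is the key technical challenge.
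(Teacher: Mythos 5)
First, a point of reference: the paper does not prove this theorem at all --- it is quoted verbatim as Theorem 3.3 of \cite{Garvan2} --- so your attempt has to be measured against Garvan's argument there rather than anything in this paper.

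Your setup is correct and is the natural first move: substituting the Bailey relation into $\beta_{n_1}$, the $r=0$ term (using $\alpha_0=1$) reproduces the first sum on the right, and after interchanging summation the theorem reduces exactly to your auxiliary identity; your $q$-Gauss evaluation of the $k=1$ case is also correct. The genuine gap is the inductive step, which you explicitly leave open and which, as set up, will not close: peeling the outermost variable gives $\sum_{n_k\ge N}q^{n_k}/(1-q^{n_k})^2=\sum_{j\ge1}jq^{jN}/(1-q^j)$, and the resulting $j$-twisted inner sums are not instances of the inductive hypothesis, nor is there any mechanism forcing their sum over $j$ to recombine into $(1-q^r)^{-2}$ times the $(k-1)$ case. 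The induction has to be run from the inside out, and the engine is a \emph{finite} refinement of your $k=1$ identity,
\begin{align*}
\sum_{n_1=r}^{N}\frac{q^{n_1}(q;q)_{n_1-1}^2}{(q;q)_{n_1-r}(q;q)_{n_1+r}}
=\frac{q^{r}}{(1-q^{r})^{2}}\cdot\frac{(q;q)_{N}^{2}}{(q;q)_{N-r}(q;q)_{N+r}},
\end{align*}
which telescopes immediately from the elementary identity $(1-q^N)^2-(1-q^{N-r})(1-q^{N+r})=q^{N-r}(1-q^{r})^{2}$. Applying this with $N=n_2$ to the innermost sum of your auxiliary identity turns the $k$-fold sum into $q^{r}(1-q^{r})^{-2}$ times the $(k-1)$-fold sum of identical shape (note $(q;q)_{n_2}^2/(1-q^{n_2})^2=(q;q)_{n_2-1}^2$), so the induction on $k$ closes; the $N\to\infty$ limit of the display is exactly your $q$-Gauss evaluation and settles $k=1$. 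This finite identity is equivalent to the statement that the map $\alpha_n\mapsto q^{n}\alpha_n/(1-q^{n})^{2}$ for $n\ge1$, $\beta_n\mapsto(q;q)_n^{-2}\sum_{j=1}^{n}q^{j}(q;q)_j^{2}\beta_j/(1-q^{j})^{2}$ sends Bailey pairs relative to $(1,q)$ to Bailey pairs, and Garvan's proof is precisely the $k$-fold iteration of that map. Your route becomes a complete proof once you replace the infinite $k=1$ evaluation by its finite form and induct on the innermost rather than the outermost variable.
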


The following is Corollary 3.4 of Theorem 3.3 from \cite{Garvan2},
\begin{corollary}\label{CorollayBaileyForCranks}
\begin{align*}
	\sum_{n_k\ge n_{k-1}\ge \dots \ge n_1 \ge 1}
	\frac{q^{n_1+n_2+\dots+n_k}}{(1-q^{n_k})^2(1-q^{n_{k-1}})^2\dots(1-q^{n_1})^2}
	&=
	\sum_{n=1}^\infty \frac{(-1)^{n+1}q^{n(n-1)/2+kn}(1+q^{n})}{(1-q^{n})^{2k}}
.
\end{align*}
\end{corollary}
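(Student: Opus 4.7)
The plan is to invoke Theorem \ref{TheoremGarvanBailey} with a carefully chosen Bailey pair whose $\beta_n$ vanishes for $n\ge 1$, thereby killing the left-hand side and leaving the identity we want on the right.

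Specifically, I would take the Bailey pair relative to $(1,q)$ defined by
\begin{align*}
	\alpha_0 = \beta_0 = 1, \qquad \beta_n = 0 \text{ for } n\ge 1, \qquad
	\alpha_r = (-1)^r q^{r(r-1)/2}(1+q^r) \text{ for } r\ge 1.
\end{align*}
With this choice, every term in the iterated sum on the left of Theorem \ref{TheoremGarvanBailey} contains the factor $\beta_{n_1}$ with $n_1\ge 1$, so the entire left-hand side collapses to $0$. Theorem \ref{TheoremGarvanBailey} then rearranges to
\begin{align*}
	\sum_{n_k\ge n_{k-1}\ge \dots \ge n_1 \ge 1}
	\frac{q^{n_1+n_2+\dots+n_k}}{(1-q^{n_k})^2(1-q^{n_{k-1}})^2\dots(1-q^{n_1})^2}
	= -\sum_{r=1}^\infty \frac{q^{kr}\alpha_r}{(1-q^r)^{2k}},
\end{align*}
and substituting $\alpha_r = (-1)^r q^{r(r-1)/2}(1+q^r)$ on the right gives exactly $\sum_{n=1}^\infty (-1)^{n+1}q^{n(n-1)/2+kn}(1+q^n)/(1-q^n)^{2k}$, which is the desired identity.

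The main obstacle is verifying that the above $(\alpha_n,\beta_n)$ really is a Bailey pair, i.e.\ that $\sum_{r=0}^n \alpha_r/[(q;q)_{n-r}(q;q)_{n+r}] = 0$ for every $n\ge 1$. After clearing denominators this becomes the $q$-binomial identity
\begin{align*}
	\binom{2n}{n}_q + \sum_{r=1}^n (-1)^r q^{r(r-1)/2}(1+q^r) \binom{2n}{n-r}_q = 0,
\end{align*}
which, using $\binom{2n}{n-r}_q = \binom{2n}{n+r}_q$ to symmetrize, is equivalent to a single sum that follows from the $q$-binomial theorem $\sum_{s=0}^{N} (-1)^s q^{s(s-1)/2}\binom{N}{s}_q z^s = (z;q)_N$ specialized at $N=2n$ and $z=q^{-n}$ (the factor $(1-q^{n-n})$ in $(q^{-n};q)_{2n}$ forces the product to vanish for $n\ge 1$). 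I expect this verification to be short but to constitute the only nontrivial step; once the Bailey pair is in hand, the corollary is a direct specialization of the preceding theorem.
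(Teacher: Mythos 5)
Your proposal is correct: the pair you write down is the unit Bailey pair relative to $(1,q)$ (with $\beta_n=\delta_{n,0}$ and $\alpha_n=(-1)^nq^{n(n-1)/2}(1+q^n)$ for $n\ge 1$, verified exactly as you indicate from $(q^{-n};q)_{2n}=0$), and feeding it into Theorem \ref{TheoremGarvanBailey} kills the left-hand side and yields the stated identity. This is the same derivation as in the source the paper cites for this corollary (Corollary 3.4 of \cite{Garvan2}); the paper itself gives no independent proof.
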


For $\eta 2$ we will use the following.
\begin{corollary}\label{CorollayBaileyForM2Rank}
\begin{align*}
	&\sum_{n_k\ge n_{k-1}\ge \dots \ge n_1 \ge 1}
	\frac{\aqprod{q^2}{q^2}{n_1}q^{2n_1+2n_2+\dots+2n_k}}
		{\aqprod{-q}{q^2}{n_1}(1-q^{2n_k})^2(1-q^{2n_{k-1}})^2\dots(1-q^{2n_1})^2}
	\\
	&=
	\sum_{n_k\ge n_{k-1}\ge \dots \ge n_1 \ge 1}
	\frac{q^{2n_1+2n_2+\dots+2n_k}}{(1-q^{2n_k})^2(1-q^{2n_{k-1}})^2\dots(1-q^{2n_1})^2}
		\\&\qquad
		+
		\sum_{n=1}^\infty \frac{(-1)^{n}q^{n(2n-1)+2kn}(1+q^{2n})}{(1-q^{2n})^{2k}}
\end{align*}
\end{corollary}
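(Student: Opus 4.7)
My plan is to deduce the corollary from Theorem \ref{TheoremGarvanBailey}, applied with $q$ replaced by $q^2$ to a specific Bailey pair $(\alpha_n,\beta_n)$ relative to $(1,q^2)$. Reading off the shape of the desired identity, the required pair is
\begin{align*}
\beta_n=\frac{1}{\aqprod{q^2}{q^2}{n}\aqprod{-q}{q^2}{n}},\qquad \alpha_n=(-1)^n q^{n(2n-1)}(1+q^{2n}) \text{ for } n\ge 1,
\end{align*}
together with the normalization $\alpha_0=\beta_0=1$.

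Assuming this is in fact a Bailey pair, the corollary is immediate: on the left-hand side of Theorem \ref{TheoremGarvanBailey} the factor $\aqprod{q^2}{q^2}{n_1}^2\beta_{n_1}$ collapses to $\aqprod{q^2}{q^2}{n_1}/\aqprod{-q}{q^2}{n_1}$, yielding exactly the left-hand side of the corollary, while the single sum on the right-hand side becomes
\begin{align*}
\sum_{r=1}^\infty \frac{q^{2kr}\alpha_r}{(1-q^{2r})^{2k}} = \sum_{n=1}^\infty \frac{(-1)^n q^{n(2n-1)+2kn}(1+q^{2n})}{(1-q^{2n})^{2k}},
\end{align*}
matching the statement.

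The main obstacle is therefore verifying the Bailey pair relation $\beta_n=\sum_{r=0}^n \alpha_r/\bigl(\aqprod{q^2}{q^2}{n-r}\aqprod{q^2}{q^2}{n+r}\bigr)$. Fortunately this is already implicit in the two expressions for $R2(z,q)$ recorded earlier in this section. The Hecke-type series $\sum_{n\ge 0}q^{n^2}\aqprod{-q}{q^2}{n}/\aqprod{zq^2,z^{-1}q^2}{q^2}{n}$ is precisely the standard Bailey-lemma output built from the candidate $\beta_n$ with auxiliary parameters $(z,z^{-1})$, while the equivalent Watson-transformed form
\begin{align*}
\frac{\aqprod{-q}{q^2}{\infty}}{\aqprod{q^2}{q^2}{\infty}}\sum_{n=-\infty}^\infty \frac{(-1)^n q^{n(2n+1)}(1-z)}{1-zq^{2n}}
\end{align*}
is the corresponding output from the candidate $\alpha_n$; extracting the residue at $z=q^{-2n}$ (or equating Laurent coefficients in $z$) on the equality of these two forms reproduces the claimed $\alpha_n$, confirming the Bailey pair. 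Equivalently, the pair can be quoted from Slater's tables as the one underlying the $M_2$-rank generating function.
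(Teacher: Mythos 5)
Your proposal is correct and is essentially the paper's proof: the paper applies Theorem \ref{TheoremGarvanBailey} (with $q$ replaced by $q^2$) to exactly the Bailey pair you name, $\beta_n = 1/\aqprod{-q,q^2}{q^2}{n}$ and $\alpha_n = (-1)^nq^{2n^2}(q^n+q^{-n}) = (-1)^nq^{n(2n-1)}(1+q^{2n})$ relative to $(1,q^2)$, and justifies the pair by citing Slater (p.~468) — the same fallback you offer. Your alternative sketch of re-deriving the pair from the two forms of $R2(z,q)$ is shakier than you suggest (the first form is not the Bailey-lemma left-hand side built from this $\beta_n$ with parameters $(z,z^{-1})$, and the residue extraction at $z=q^{-2n}$ involves infinitely many terms of that sum), so the direct citation is the cleaner route.
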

\begin{proof}
We have a Bailey pair for $(1,q^2)$ from \cite[page 468]{Slater} given by
\begin{align*}
	\alpha_n &= \PieceTwo{1}{(-1)^nq^{2n^2}(q^n+q^{-n})}{n=0}{n\ge 1}
	\\
	\beta_n &= \frac{1}{\aqprod{-q,q^2}{q^2}{n}}. 
\end{align*}
Applying Theorem \ref{TheoremGarvanBailey} to this Bailey pair gives the
identity.
\end{proof}

For $\overline{\eta}$ we will use the following.
\begin{corollary}\label{CorollayBaileyForOverlineRank}
\begin{align*}
	&\sum_{n_k\ge n_{k-1}\ge \dots \ge n_1 \ge 1}
	\frac{\aqprod{q}{q}{n_1}^2q^{n_1+n_2+\dots+n_k}}
		{\aqprod{q^2}{q^2}{n_1}(1-q^{n_k})^2(1-q^{n_{k-1}})^2\dots(1-q^{n_1})^2}
	\\
	&=
	\sum_{n_k\ge n_{k-1}\ge \dots \ge n_1 \ge 1}
	\frac{q^{n_1+n_2+\dots+n_k}}{(1-q^{n_k})^2(1-q^{n_{k-1}})^2\dots(1-q^{n_1})^2}
	+
	\sum_{n=1}^\infty \frac{(-1)^{n}2q^{n^2+kn}}{(1-q^{n})^{2k}}
.
\end{align*}
\end{corollary}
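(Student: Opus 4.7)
The plan is to mirror the proof of Corollary \ref{CorollayBaileyForM2Rank}: identify a Bailey pair relative to $(1,q)$ whose substitution into Theorem \ref{TheoremGarvanBailey} reproduces the claimed identity, and then invoke that theorem directly.

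To read off the correct pair, I compare the two sides of the corollary with the conclusion of Theorem \ref{TheoremGarvanBailey} taken at base $q$. The factor $\aqprod{q}{q}{n_1}^2\beta_{n_1}$ in the theorem must reproduce $\aqprod{q}{q}{n_1}^2/\aqprod{q^2}{q^2}{n_1}$ on the left of the corollary, which forces $\beta_n = 1/\aqprod{q^2}{q^2}{n}$. Similarly, matching $\sum_{r\ge 1} q^{kr}\alpha_r/(1-q^r)^{2k}$ against $\sum_{n\ge 1} 2(-1)^n q^{n^2+kn}/(1-q^n)^{2k}$ forces $\alpha_0=1$ and $\alpha_n = 2(-1)^n q^{n^2}$ for $n\ge 1$.

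The substantive step is then to verify that
\begin{align*}
\alpha_n = \PieceTwo{1}{2(-1)^n q^{n^2}}{n=0}{n\ge 1}, \qquad \beta_n = \frac{1}{\aqprod{q^2}{q^2}{n}}
\end{align*}
really is a Bailey pair relative to $(1,q)$, i.e., that $\sum_{r=0}^n \alpha_r/\bigl(\aqprod{q}{q}{n-r}\aqprod{q}{q}{n+r}\bigr) = 1/\aqprod{q^2}{q^2}{n}$. This is a classical pair appearing in Slater's list \cite{Slater}, near the one already cited on page~468, so we simply invoke it; a short direct check at small $n$ (or an induction) also suffices. Once this is in hand, substituting into Theorem \ref{TheoremGarvanBailey} yields the corollary with no further manipulation, and the only real obstacle is locating the correct Bailey pair; everything else is bookkeeping.
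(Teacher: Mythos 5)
Your proposal is correct and takes essentially the same approach as the paper: the paper's proof consists of quoting exactly this Bailey pair relative to $(1,q)$, namely $\alpha_0=1$, $\alpha_n=(-1)^n 2q^{n^2}$ for $n\ge 1$ and $\beta_n = 1/\aqprod{q^2}{q^2}{n}$ (cited from Slater, p.~469, via \cite{GarvanJennings}), and then applying Theorem \ref{TheoremGarvanBailey}. Your reverse-engineering of the pair from the two sides of the identity and your verification remark are consistent with, and no weaker than, what the paper does.
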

\begin{proof}
We have a Bailey pair for $(1,q)$ from \cite[page 469]{Slater} given by
\cite{GarvanJennings}, given by
\begin{align*}
	\alpha_n &= \PieceTwo{1}{(-1)^n2q^{n^2}}{n=0}{n\ge 1}
	\\
	\beta_n &= \frac{1}{\aqprod{q^2}{q^2}{n}}. 
\end{align*}
Applying Theorem \ref{TheoremGarvanBailey} to this Bailey pair gives the
identity.
\end{proof}

Lastly we will use the following corollary for $\overline{\eta 2}$ .
\begin{corollary}\label{CorollayBaileyForOverline2Rank}
\begin{align*}
	&\sum_{n_k\ge n_{k-1}\ge \dots \ge n_1 \ge 1}
	\frac{\aqprod{q^2}{q^2}{n_1}^2\aqprod{q}{q^2}{n_1}^2  q^{2n_1+2n_2+\dots+2n_k}}
		{\aqprod{q^2}{q^2}{2n_1}(1-q^{2n_k})^2(1-q^{2n_{k-1}})^2\dots(1-q^{2n_1})^2}
	\\
	&=
	\sum_{n_k\ge n_{k-1}\ge \dots \ge n_1 \ge 1}
	\frac{q^{2n_1+2n_2+\dots+2n_k}}{(1-q^{2n_k})^2(1-q^{2n_{k-1}})^2\dots(1-q^{2n_1})^2}
	+
	\sum_{n=1}^\infty \frac{(-1)^{n}2q^{n^2+2kn}}{(1-q^{2n})^{2k}}
.
\end{align*}
\end{corollary}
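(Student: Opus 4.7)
The plan is to mirror the proofs of Corollaries \ref{CorollayBaileyForM2Rank} and \ref{CorollayBaileyForOverlineRank}: locate a Bailey pair relative to $(1,q^2)$ with $\alpha_0=\beta_0=1$ whose plug-in into Theorem \ref{TheoremGarvanBailey} (with $q$ replaced by $q^2$ throughout) produces exactly the claimed identity. After that substitution, the theorem reads
\begin{align*}
\sum_{n_k\ge\dots\ge n_1\ge 1}
\frac{\aqprod{q^2}{q^2}{n_1}^2 q^{2(n_1+\dots+n_k)}\beta_{n_1}}
{(1-q^{2n_k})^2\dots(1-q^{2n_1})^2}
=
\sum_{n_k\ge\dots\ge n_1\ge 1}
\frac{q^{2(n_1+\dots+n_k)}}{(1-q^{2n_k})^2\dots(1-q^{2n_1})^2}
+\sum_{r=1}^\infty \frac{q^{2kr}\alpha_r}{(1-q^{2r})^{2k}},
\end{align*}
so matching with the statement forces the Bailey pair to satisfy
\begin{align*}
\alpha_n=(-1)^n 2 q^{n^2}\ (n\ge 1),\qquad
\beta_n=\frac{\aqprod{q}{q^2}{n}^2}{\aqprod{q^2}{q^2}{2n}}.
\end{align*}

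First I would pin down that these are indeed a Bailey pair. The value of $\alpha_n$ is precisely twice the $\alpha_n$ used in Corollary \ref{CorollayBaileyForOverlineRank} but with base $q^2$ in place of $q$, which strongly suggests it appears in Slater's tables (\cite{Slater}); failing a direct citation, I would verify the defining relation
\begin{align*}
\beta_n=\sum_{r=0}^n\frac{\alpha_r}{\aqprod{q^2}{q^2}{n-r}\aqprod{q^2}{q^2}{n+r}}
\end{align*}
directly, for instance by recognizing the resulting sum as a very-well-poised ${}_6\phi_5$ and collapsing it with Jackson's summation, or by passing from the pair of Corollary \ref{CorollayBaileyForOverlineRank} (base $q$) to a pair at base $q^2$ via the standard change-of-base Bailey lemma.

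Once the Bailey pair is in hand, the rest is formal: substitute $\alpha_n$ and $\beta_n$ into the $(q\to q^2)$ form of Theorem \ref{TheoremGarvanBailey}, note that the factor $\aqprod{q^2}{q^2}{n_1}^2\beta_{n_1}$ simplifies to $\aqprod{q^2}{q^2}{n_1}^2\aqprod{q}{q^2}{n_1}^2/\aqprod{q^2}{q^2}{2n_1}$ as required by the left-hand side, and observe that $q^{2kr}\alpha_r=(-1)^r 2q^{r^2+2kr}$, producing the advertised tail series. The main obstacle is verifying the Bailey pair itself; the surrounding rearrangements are purely mechanical.
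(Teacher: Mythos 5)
Your proposal is correct and takes essentially the same route as the paper: you identify exactly the required Bailey pair $\alpha_n=(-1)^n2q^{n^2}$ for $n\ge 1$, $\beta_n=\aqprod{q}{q^2}{n}^2/\aqprod{q^2}{q^2}{2n}$ relative to $(1,q^2)$, and feed it into Theorem \ref{TheoremGarvanBailey} with $q$ replaced by $q^2$. The only difference is how the pair is sourced: the paper derives it by setting $z=-1$ in the finite summation from Theorem 7 of \cite{AW} (noting it is also Lemma 2.3 of \cite{BMS}), rather than via Slater's tables, Jackson's summation, or a change-of-base lemma.
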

\begin{proof}
As in the proof of Theorem 7 of \cite{AW}, we have
\begin{align*}
	\sum_{j=-L}^L \frac{ z^j q^{j^2} } 
	{{\aqprod{q^2}{q^2}{L-j}\aqprod{q^2}{q^2}{L+j}}}
	&=
	\frac{\aqprod{-zq,-q/z}{q^2}{L}}{\aqprod{q^2}{q^2}{2L}}
.
\end{align*}
Setting $z=-1$ gives a Bailey pair relative to $(1,q^2)$ where
$\alpha_n$ and $\beta_n$ are
\begin{align*}
	\alpha_n &= \PieceTwo{1}{(-1)^n2q^{n^2}}{n=0}{n\ge 1}
	\\
	\beta_n &= \frac{\aqprod{q}{q^2}{n}^2}{\aqprod{q^2}{q^2}{2n}}. 
\end{align*}
This Bailey pair is also given as Lemma 2.3 in \cite{BMS}.
Applying Theorem \ref{TheoremGarvanBailey} to this Bailey pair gives the
identity.
\end{proof}

Next we find expressions for $\Mspt{k}{n}$, $\sptBar{k}{n}$, 
and $\sptBarTwo{k}{n}$.
\begin{corollary}\label{CorollaryForM2Spt}
For all $k\ge 1$,
\begin{align*}
	&\sum_{n=1}^\infty \Mspt{k}{n} q^n
	=
	\sum_{n=1}^\infty (\mu 2_{2k}(n)-\eta 2_{2k}(n)) q^n
	\\
	&=
	\sum_{n_k\ge n_{k-1}\ge \dots \ge n_1 \ge 1}
	\frac{q^{2n_1+2n_2+\dots+2n_k}}{(1-q^{2n_k})^2(1-q^{2n_{k-1}})^2\dots(1-q^{2n_1})^2}
	\frac{\aqprod{-q^{2n_1+1}}{q^2}{\infty}}{\aqprod{q^{2n_1+2}}{q^2}{\infty}}
.
\end{align*}
\end{corollary}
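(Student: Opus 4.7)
The plan is to combine the closed-form $q$-series expressions for $\mu 2_{2k}(n)$ and $\eta 2_{2k}(n)$ from Theorem \ref{TheoremExpressionsForSymmetric} with the two Bailey-pair corollaries, Corollary \ref{CorollayBaileyForCranks} and Corollary \ref{CorollayBaileyForM2Rank}. The key observation is that the sums appearing in Theorem \ref{TheoremExpressionsForSymmetric} for $\mu 2_{2k}$ and $\eta 2_{2k}$ have exactly the shape of the tail sums in these two corollaries (after the substitution $q \mapsto q^2$ in Corollary \ref{CorollayBaileyForCranks}), so the corollaries let us replace each tail sum by a multi-sum, minus a correction that itself is a multi-sum.

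Explicitly, I would first rewrite
\begin{align*}
\frac{\aqprod{q^2}{q^2}{\infty}}{\aqprod{-q}{q^2}{\infty}}\sum_{n=1}^\infty \mu 2_{2k}(n) q^n
&=
\sum_{n\ge 1}\frac{(-1)^{n+1} q^{n(n-1)+2kn}(1+q^{2n})}{(1-q^{2n})^{2k}}
\end{align*}
using Theorem \ref{TheoremExpressionsForSymmetric}, and then apply Corollary \ref{CorollayBaileyForCranks} with $q$ replaced by $q^2$ to turn the right-hand side into the pure multi-sum $\sum_{n_k\ge\cdots\ge n_1\ge 1} q^{2n_1+\cdots+2n_k}/\prod_j (1-q^{2n_j})^2$. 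For $\eta 2_{2k}$, Theorem \ref{TheoremExpressionsForSymmetric} gives the same shape but with exponent $n(2n-1)+2kn$, which is precisely the tail sum appearing in Corollary \ref{CorollayBaileyForM2Rank} (up to sign). Solving Corollary \ref{CorollayBaileyForM2Rank} for that tail sum expresses $\frac{\aqprod{q^2}{q^2}{\infty}}{\aqprod{-q}{q^2}{\infty}}\sum_{n\ge 1}\eta 2_{2k}(n)q^n$ as the same pure multi-sum minus a weighted multi-sum with factor $\aqprod{q^2}{q^2}{n_1}/\aqprod{-q}{q^2}{n_1}$.

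Subtracting the two formulas, the pure multi-sum cancels and I am left with
\begin{align*}
\frac{\aqprod{q^2}{q^2}{\infty}}{\aqprod{-q}{q^2}{\infty}}\sum_{n=1}^\infty \Mspt{k}{n}q^n
&=
\sum_{n_k\ge\cdots\ge n_1\ge 1}
\frac{q^{2n_1+\cdots+2n_k}}{(1-q^{2n_k})^2\cdots(1-q^{2n_1})^2}\cdot\frac{\aqprod{q^2}{q^2}{n_1}}{\aqprod{-q}{q^2}{n_1}}.
\end{align*}
Multiplying both sides by $\aqprod{-q}{q^2}{\infty}/\aqprod{q^2}{q^2}{\infty}$ and using the elementary identities $\aqprod{-q}{q^2}{\infty}/\aqprod{-q}{q^2}{n_1}=\aqprod{-q^{2n_1+1}}{q^2}{\infty}$ and $\aqprod{q^2}{q^2}{n_1}/\aqprod{q^2}{q^2}{\infty}=1/\aqprod{q^{2n_1+2}}{q^2}{\infty}$ produces the claimed generating function.

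The main obstacle is bookkeeping rather than ingenuity: I need to match the exponents $n(n-1)+2kn$ and $n(2n-1)+2kn$ coming from Theorem \ref{TheoremExpressionsForSymmetric} with the exponents in the two Bailey-pair corollaries (where one corollary is applied with $q\mapsto q^2$), and keep the signs aligned so that the pure multi-sum cancels in the subtraction. Once that alignment is verified, the rest reduces to the standard product identities above.
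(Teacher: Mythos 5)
Your proposal is correct and follows essentially the same route as the paper: Theorem \ref{TheoremExpressionsForSymmetric} for both symmetrized moment series, Corollary \ref{CorollayBaileyForCranks} with $q\mapsto q^2$ for the crank side, Corollary \ref{CorollayBaileyForM2Rank} for the rank side so that the pure multi-sum cancels, and the two elementary product quotient identities to finish. The sign bookkeeping you flag does work out exactly as you describe.
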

\begin{proof}
By Theorem \ref{TheoremExpressionsForSymmetric}, Corollary 
\ref{CorollayBaileyForCranks} with $q$ replaced by $q^2$,
and Corollary \ref{CorollayBaileyForM2Rank}, we have that
\begin{align*}
	&\sum_{n=1}^\infty (\mu 2_{2k}(n)- \eta 2_{2k}(n)) q^n
	\\
	&=
		\frac{\aqprod{-q}{q^2}{\infty}}{\aqprod{q^2}{q^2}{\infty}}
		\sum_{n\ge 1} \frac{(-1)^{n+1} q^{n(n-1)+2kn}(1+q^{2n})}{(1-q^{2n})^{2k}}
		\\&\qquad
		+
		\frac{\aqprod{-q}{q^2}{\infty}}{\aqprod{q^2}{q^2}{\infty}}
		\sum_{n\ge 1} \frac{(-1)^{n} q^{2n^2-n+2kn}(1+q^{2n})}{(1-q^{2n})^{2k}}
	\\
	&=
	\frac{\aqprod{-q}{q^2}{\infty}}{\aqprod{q^2}{q^2}{\infty}}
	\sum_{n_k\ge n_{k-1}\ge \dots \ge n_1 \ge 1}
	\frac{q^{2n_1+2n_2+\dots+2n_k}}{(1-q^{2n_k})^2(1-q^{2n_{k-1}})^2\dots(1-q^{2n_1})^2}
		\\&\qquad 
		+
		\frac{\aqprod{-q}{q^2}{\infty}}{\aqprod{q^2}{q^2}{\infty}}
		\sum_{n\ge 1} \frac{(-1)^{n} q^{2n^2-n+kn}(1+q^{2n})}{(1-q^{2n})^{2k}}
	\\
	&=
	\frac{\aqprod{-q}{q^2}{\infty}}{\aqprod{q^2}{q^2}{\infty}}
	\sum_{n_k\ge n_{k-1}\ge \dots \ge n_1 \ge 1}
	\frac{\aqprod{q^2}{q^2}{n_1} q^{2n_1+2n_2+\dots+2n_k}}
	{\aqprod{-q}{q^2}{n_1}(1-q^{2n_k})^2(1-q^{2n_{k-1}})^2\dots(1-q^{2n_1})^2}
	\\
	&=
	\sum_{n_k\ge n_{k-1}\ge \dots \ge n_1 \ge 1}
	\frac{q^{2n_1+2n_2+\dots+2n_k}}{(1-q^{2n_k})^2(1-q^{2n_{k-1}})^2\dots(1-q^{2n_1})^2}
	\frac{\aqprod{-q^{2n_1+1}}{q^2}{\infty}}{\aqprod{q^{2n_1+2}}{q^2}{\infty}}
.
\end{align*}
\end{proof}

\begin{corollary}\label{CorollaryForSptBar}
For all $k\ge 1$,
\begin{align*}
	\sum_{n=1}^\infty \sptBar{k}{n} q^n
	&=
	\sum_{n=1}^\infty (\overline{\mu}_{2k}(n)- \overline{\eta}_{2k}(n)) q^n
	\\
	&=
	\sum_{n_k\ge n_{k-1}\ge \dots \ge n_1 \ge 1}
	\frac{q^{n_1+n_2+\dots+n_k}}{(1-q^{n_k})^2(1-q^{n_{k-1}})^2\dots(1-q^{n_1})^2}
	\frac{\aqprod{-q^{n_1+1}}{q}{\infty}}{\aqprod{q^{n_1+1}}{q}{\infty}}
.
\end{align*}
\end{corollary}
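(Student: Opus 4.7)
The plan is to mirror the strategy that established Corollary~\ref{CorollaryForM2Spt}, working with the unshifted base $q$ instead of $q^2$. Start from Theorem~\ref{TheoremExpressionsForSymmetric}, using the ``$n\ge 1$'' forms of the series for $\overline{\mu}_{2k}$ and $\overline{\eta}_{2k}$, so that
\begin{align*}
\sum_{n=1}^\infty (\overline{\mu}_{2k}(n)-\overline{\eta}_{2k}(n))q^n
=\frac{\aqprod{-q}{q}{\infty}}{\aqprod{q}{q}{\infty}}
\left[\sum_{n\ge 1}\frac{(-1)^{n+1}q^{n(n-1)/2+kn}(1+q^n)}{(1-q^n)^{2k}}
-2\sum_{n\ge 1}\frac{(-1)^{n+1}q^{n^2+kn}}{(1-q^n)^{2k}}\right].
\end{align*}

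Next I would replace the first inner sum using Corollary~\ref{CorollayBaileyForCranks}, which rewrites it as the $k$-fold sum
$\sum_{n_k\ge\dots\ge n_1\ge 1} q^{n_1+\dots+n_k}/\prod_i (1-q^{n_i})^2$.
For the second inner sum, I would apply Corollary~\ref{CorollayBaileyForOverlineRank}: it says that exactly this ``rank-type'' series (with the sign flipped, i.e.\ $\sum_{n\ge 1}(-1)^n 2q^{n^2+kn}/(1-q^n)^{2k}$) equals the $\aqprod{q}{q}{n_1}^2/\aqprod{q^2}{q^2}{n_1}$ weighted $k$-fold sum minus the unweighted $k$-fold sum. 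Substituting, the two unweighted $k$-fold sums cancel, leaving
\begin{align*}
\sum_{n=1}^\infty (\overline{\mu}_{2k}(n)-\overline{\eta}_{2k}(n))q^n
=\frac{\aqprod{-q}{q}{\infty}}{\aqprod{q}{q}{\infty}}
\sum_{n_k\ge\dots\ge n_1\ge 1}
\frac{\aqprod{q}{q}{n_1}^2\, q^{n_1+\dots+n_k}}{\aqprod{q^2}{q^2}{n_1}(1-q^{n_k})^2\cdots(1-q^{n_1})^2}.
\end{align*}

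The remaining step is a tidy product simplification. Using $\aqprod{q^2}{q^2}{n_1}=\aqprod{q}{q}{n_1}\aqprod{-q}{q}{n_1}$, the ratio collapses to $\aqprod{q}{q}{n_1}/\aqprod{-q}{q}{n_1}$. Combining this with the prefactor $\aqprod{-q}{q}{\infty}/\aqprod{q}{q}{\infty}$ and splitting each infinite product as $\aqprod{\pm q}{q}{\infty}=\aqprod{\pm q}{q}{n_1}\aqprod{\pm q^{n_1+1}}{q}{\infty}$, one obtains $\aqprod{-q^{n_1+1}}{q}{\infty}/\aqprod{q^{n_1+1}}{q}{\infty}$ inside the sum, which is the claimed right-hand side.

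I do not expect any serious obstacle: all the heavy lifting has already been packaged into the two Bailey-pair corollaries and the symmetrized-moment expressions in Theorem~\ref{TheoremExpressionsForSymmetric}. The only point requiring a little care is matching the signs correctly when invoking Corollary~\ref{CorollayBaileyForOverlineRank} (since its right-hand side contains $(-1)^n$ rather than $(-1)^{n+1}$), and verifying that the $q^2$-product identity $\aqprod{q^2}{q^2}{n_1}=\aqprod{q}{q}{n_1}\aqprod{-q}{q}{n_1}$ is the right tool to collapse the weight; both are routine.
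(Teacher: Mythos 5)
Your proposal is correct and follows essentially the same route as the paper: Theorem \ref{TheoremExpressionsForSymmetric} for the two series, Corollary \ref{CorollayBaileyForCranks} for the crank sum, Corollary \ref{CorollayBaileyForOverlineRank} for the rank sum (with the unweighted $k$-fold sums cancelling), and then the product simplification. The only cosmetic difference is that you collapse the weight via $\aqprod{q^2}{q^2}{n_1}=\aqprod{q}{q}{n_1}\aqprod{-q}{q}{n_1}$ directly, whereas the paper routes through $\aqprod{q^{2n_1+2}}{q^2}{\infty}/\aqprod{q^{n_1+1}}{q}{\infty}^2$; these are trivially equivalent.
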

\begin{proof}
By Theorem \ref{TheoremExpressionsForSymmetric}, Corollary \ref{CorollayBaileyForCranks},
and Corollary \ref{CorollayBaileyForOverlineRank}, we have that
\begin{align*}
	&\sum_{n=1}^\infty (\overline{\mu}_{2k}(n)- \overline{\eta}_{2k}(n)) q^n
	\\
	&=
		\frac{\aqprod{-q}{q}{\infty}}{\aqprod{q}{q}{\infty}}
		\sum_{n\ge 1} \frac{(-1)^{n+1} q^{n(n-1)/2+kn}(1+q^{n})}{(1-q^{n})^{2k}}
		+
		2\frac{\aqprod{-q}{q}{\infty}}{\aqprod{q}{q}{\infty}}
		\sum_{n\ge 1} \frac{(-1)^{n} q^{n^2+kn}}{(1-q^n)^{2k}}
	\\
	&=
	\frac{\aqprod{-q}{q}{\infty}}{\aqprod{q}{q}{\infty}}
		\sum_{n_k\ge n_{k-1}\ge \dots \ge n_1 \ge 1}
		\frac{q^{n_1+n_2+\dots+n_k}}{(1-q^{n_k})^2(1-q^{n_{k-1}})^2\dots(1-q^{n_1})^2}
		\\&\quad
		+
		\frac{\aqprod{-q}{q}{\infty}}{\aqprod{q}{q}{\infty}}
		\sum_{n\ge 1} \frac{(-1)^{n} 2q^{n^2+kn}}{(1-q^n)^{2k}}
	\\
	&=
	\frac{\aqprod{q^2}{q^2}{\infty}}{\aqprod{q}{q}{\infty}^2}
	\sum_{n_k\ge n_{k-1}\ge \dots \ge n_1 \ge 1}
	\frac{\aqprod{q}{q}{n_1}^2q^{n_1+n_2+\dots+n_k}}
	{\aqprod{q^2}{q^2}{n_1}(1-q^{n_k})^2(1-q^{n_{k-1}})^2\dots(1-q^{n_1})^2}
	\\
	&=
	\sum_{n_k\ge n_{k-1}\ge \dots \ge n_1 \ge 1}
	\frac{q^{n_1+n_2+\dots+n_k}}{(1-q^{n_k})^2(1-q^{n_{k-1}})^2\dots(1-q^{n_1})^2}
	\frac{\aqprod{q^{2n_1+2}}{q^2}{\infty}}{\aqprod{q^{n_1+1}}{q}{\infty}^2}
	\\
	&=
	\sum_{n_k\ge n_{k-1}\ge \dots \ge n_1 \ge 1}
	\frac{q^{n_1+n_2+\dots+n_k}}{(1-q^{n_k})^2(1-q^{n_{k-1}})^2\dots(1-q^{n_1})^2}
	\frac{\aqprod{-q^{n_1+1}}{q}{\infty}}{\aqprod{q^{n_1+1}}{q}{\infty}}
.
\end{align*}
\end{proof}

\begin{corollary}\label{CorollaryForSptBar2}
For all $k\ge 1$,
\begin{align*}
	\sum_{n=1}^\infty \sptBarTwo{k}{n} q^n
	&=
	\sum_{n=1}^\infty (\overline{\mu 2}_{2k}(n)- \overline{\eta 2}_{2k}(n)) q^n
	\\
	&=
	\sum_{n_k\ge n_{k-1}\ge \dots \ge n_1 \ge 1}
	\frac{q^{2n_1+2n_2+\dots+2n_k}}{(1-q^{2n_k})^2(1-q^{2n_{k-1}})^2\dots(1-q^{2n_1})^2}
	\frac{\aqprod{-q^{2n_1+1}}{q}{\infty}}{\aqprod{q^{2n_1+1}}{q}{\infty}}
.
\end{align*}
\end{corollary}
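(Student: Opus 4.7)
The plan is to mimic exactly the proofs of Corollaries \ref{CorollaryForM2Spt} and \ref{CorollaryForSptBar}, using Theorem \ref{TheoremExpressionsForSymmetric} to convert the symmetrized moments into single sums and then using two of the Bailey-pair corollaries (one for cranks, one for the $\overline{M2}$-rank) to repackage these single sums as a nested multi-sum. First I would write
\begin{align*}
\sum_{n=1}^\infty (\overline{\mu 2}_{2k}(n)-\overline{\eta 2}_{2k}(n))q^n
&= \frac{\aqprod{-q}{q}{\infty}}{\aqprod{q}{q}{\infty}}
\Biggl[\sum_{n\ge 1}\frac{(-1)^{n+1}q^{n(n-1)+2kn}(1+q^{2n})}{(1-q^{2n})^{2k}}
\\
&\qquad\qquad + \sum_{n\ge 1}\frac{(-1)^{n}\,2q^{n^2+2kn}}{(1-q^{2n})^{2k}}\Biggr]
\end{align*}
using the two ``$n\ge 1$'' forms from Theorem \ref{TheoremExpressionsForSymmetric}.

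Next I would rewrite the first inner sum by Corollary \ref{CorollayBaileyForCranks} with $q\mapsto q^2$, which identifies it with the base nested sum $\sum_{n_k\ge\dots\ge n_1\ge 1} q^{2n_1+\dots+2n_k}/\prod(1-q^{2n_i})^2$. For the second inner sum I would apply Corollary \ref{CorollayBaileyForOverline2Rank}, which, after transposition of sign, equates it to the difference between the Bailey-type nested sum with weight $\aqprod{q^2}{q^2}{n_1}^2\aqprod{q}{q^2}{n_1}^2/\aqprod{q^2}{q^2}{2n_1}$ and exactly the same base nested sum. The two copies of the base sum cancel, leaving
\[
\sum_{n=1}^\infty(\overline{\mu 2}_{2k}(n)-\overline{\eta 2}_{2k}(n))q^n
= \frac{\aqprod{-q}{q}{\infty}}{\aqprod{q}{q}{\infty}}
\sum_{n_k\ge\dots\ge n_1\ge 1}\frac{\aqprod{q^2}{q^2}{n_1}^2\aqprod{q}{q^2}{n_1}^2\,q^{2n_1+\dots+2n_k}}{\aqprod{q^2}{q^2}{2n_1}(1-q^{2n_k})^2\dots(1-q^{2n_1})^2}.
\]

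The remaining step is a purely formal $q$-product simplification: I must verify
\[
\frac{\aqprod{-q}{q}{\infty}}{\aqprod{q}{q}{\infty}}\cdot\frac{\aqprod{q^2}{q^2}{n_1}^2\aqprod{q}{q^2}{n_1}^2}{\aqprod{q^2}{q^2}{2n_1}}
= \frac{\aqprod{-q^{2n_1+1}}{q}{\infty}}{\aqprod{q^{2n_1+1}}{q}{\infty}}.
\]
This reduces to the identities $\aqprod{q}{q}{2n_1}=\aqprod{q}{q^2}{n_1}\aqprod{q^2}{q^2}{n_1}$ and $\aqprod{-q}{q}{2n_1}\aqprod{q}{q}{2n_1}=\aqprod{q^2}{q^2}{2n_1}$, together with splitting the infinite products at $q^{2n_1+1}$; cancelling the finite tails leaves precisely the claimed ratio. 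Substituting then yields the desired generating-function identity.

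The main obstacle is not any one step but rather lining up the signs and $q$-exponents so that the two Bailey corollaries produce matching ``base'' nested sums that cancel; once they do cancel, everything else is bookkeeping with $q$-Pochhammer symbols, entirely parallel to the proof of Corollary \ref{CorollaryForSptBar}.
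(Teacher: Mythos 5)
Your proposal is correct and follows essentially the same route as the paper: apply Theorem \ref{TheoremExpressionsForSymmetric}, convert the crank sum via Corollary \ref{CorollayBaileyForCranks} with $q\mapsto q^2$ and the rank sum via Corollary \ref{CorollayBaileyForOverline2Rank} so that the two copies of the base nested sum cancel, and finish with the same $q$-Pochhammer simplification $\frac{\aqprod{-q}{q}{\infty}}{\aqprod{q}{q}{\infty}}\cdot\frac{\aqprod{q^2}{q^2}{n_1}^2\aqprod{q}{q^2}{n_1}^2}{\aqprod{q^2}{q^2}{2n_1}}=\frac{\aqprod{q^{4n_1+2}}{q^2}{\infty}}{\aqprod{q^{2n_1+1}}{q}{\infty}^2}=\frac{\aqprod{-q^{2n_1+1}}{q}{\infty}}{\aqprod{q^{2n_1+1}}{q}{\infty}}$. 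All signs and exponents line up as you hoped, so no further work is needed.
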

\begin{proof}
By Theorem \ref{TheoremExpressionsForSymmetric}, Corollary \ref{CorollayBaileyForCranks}
with $q$ replaced by $q^2$,
and Corollary \ref{CorollayBaileyForOverline2Rank}, we have that
\begin{align*}
	&\sum_{n=1}^\infty (\overline{\mu 2}_{2k}(n)- \overline{\eta 2}_{2k}(n)) q^n
	\\
	&=
		\frac{\aqprod{-q}{q}{\infty}}{\aqprod{q}{q}{\infty}}
		\sum_{n\ge 1} \frac{(-1)^{n+1} q^{n(n-1)+2kn}(1+q^{2n})}{(1-q^{2n})^{2k}}
		+
		2\frac{\aqprod{-q}{q}{\infty}}{\aqprod{q}{q}{\infty}}
		\sum_{n\ge 1} \frac{(-1)^{n} q^{n^2+2kn}}{(1-q^{2n})^{2k}}
	\\
	&=
	\frac{\aqprod{-q}{q}{\infty}}{\aqprod{q}{q}{\infty}}
		\sum_{n_k\ge n_{k-1}\ge \dots \ge n_1 \ge 1}
		\frac{q^{2n_1+2n_2+\dots+2n_k}}{(1-q^{2n_k})^2(1-q^{2n_{k-1}})^2\dots(1-q^{2n_1})^2}
		\\&\quad
		+
		\frac{\aqprod{-q}{q}{\infty}}{\aqprod{q}{q}{\infty}}
		\sum_{n\ge 1} \frac{(-1)^{n} 2q^{n^2+2kn}}{(1-q^{2n})^{2k}}
	\\
	&=
	\frac{\aqprod{-q}{q}{\infty}}{\aqprod{q}{q}{\infty}}
	\sum_{n_k\ge n_{k-1}\ge \dots \ge n_1 \ge 1}
	\frac{\aqprod{q^2}{q^2}{n_1}^2\aqprod{q}{q^2}{n_1}^2  q^{2n_1+2n_2+\dots+2n_k}}
	{\aqprod{q^2}{q^2}{2n_1}(1-q^{2n_k})^2(1-q^{2n_{k-1}})^2\dots(1-q^{2n_1})^2}
	\\
	&=
	\frac{\aqprod{q^2}{q^2}{\infty}}{\aqprod{q}{q}{\infty}^2}
	\sum_{n_k\ge n_{k-1}\ge \dots \ge n_1 \ge 1}
	\frac{\aqprod{q}{q}{2n_1}^2 q^{2n_1+2n_2+\dots+2n_k}}
	{\aqprod{q^2}{q^2}{2n_1}(1-q^{2n_k})^2(1-q^{2n_{k-1}})^2\dots(1-q^{2n_1})^2}
	\\
	&=
	\sum_{n_k\ge n_{k-1}\ge \dots \ge n_1 \ge 1}
	\frac{q^{2n_1+2n_2+\dots+2n_k}}{(1-q^{2n_k})^2(1-q^{2n_{k-1}})^2\dots(1-q^{2n_1})^2}
	\frac{\aqprod{q^{4n_1+2}}{q^2}{\infty}}{\aqprod{q^{2n_1+1}}{q}{\infty}^2}
	\\
	&=
	\sum_{n_k\ge n_{k-1}\ge \dots \ge n_1 \ge 1}
	\frac{q^{2n_1+2n_2+\dots+2n_k}}{(1-q^{2n_k})^2(1-q^{2n_{k-1}})^2\dots(1-q^{2n_1})^2}
	\frac{\aqprod{-q^{2n_1+1}}{q}{\infty}}{\aqprod{q^{2n_1+1}}{q}{\infty}}
.
\end{align*}
\end{proof}

It is now clear that $\Mspt{k}{n} \ge 0$, $\sptBar{k}{n} \ge 0$,
and $\sptBarTwo{k}{n} \ge 0$.
Next we consider inequalities between the ordinary moments.

\begin{corollary}\label{CorollaryMomentInequalities}
Suppose $k\ge 1$. For $n=2$ and $n\ge 4$ we have
\begin{align*}
	M2_{2k}(n) > N2_{2k}(n)
.
\end{align*}
For $n\ge 1$ we have
\begin{align*}
	\overline{M}_{2k}(n) > \overline{N}_{2k}(n)
.
\end{align*}
For $n=2$ and $n\ge 4$ we have
\begin{align*}
	\overline{M2}_{2k}(n) > \overline{N2}_{2k}(n)
.
\end{align*}
\end{corollary}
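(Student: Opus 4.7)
The plan is to reduce each inequality to a statement about the higher order smallest parts functions whose non-negativity was just established. Recall from the introduction that the ordinary $2k$-th moments decompose as
\begin{align*}
    \overline{M}_{2k}(n) - \overline{N}_{2k}(n)
    &= \sum_{j=1}^{k} (2j)!\, S^*(k,j)\bigl(\overline{\mu}_{2j}(n)-\overline{\eta}_{2j}(n)\bigr)
     = \sum_{j=1}^{k} (2j)!\, S^*(k,j)\, \sptBar{j}{n},
\end{align*}
and analogously for the $M2/N2$ and $\overline{M2}/\overline{N2}$ pairs. The numbers $S^*(k,j)$ are non-negative by an easy induction on the given recursion (both $S^*(n,k-1)$ and $k^2S^*(n,k)$ are non-negative), and a separate induction using $S^*(n+1,1)=S^*(n,0)+S^*(n,1)=S^*(n,1)$ gives $S^*(k,1)=1$ for every $k\ge 1$.

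Next, Corollaries \ref{CorollaryForM2Spt}, \ref{CorollaryForSptBar}, and \ref{CorollaryForSptBar2} each express the generating function of the higher order smallest parts function as a multi-sum whose summands are manifestly power series in $q$ with non-negative coefficients: the factors $q^{n_1+\dots+n_k}/\prod(1-q^{n_i})^2$ and the infinite product $\aqprod{-q^{\cdot}}{q^{\cdot}}{\infty}/\aqprod{q^{\cdot}}{q^{\cdot}}{\infty}$ all have non-negative coefficients. Hence $\sptBar{j}{n}$, $\sptBarTwo{j}{n}$, and $\Mspt{j}{n}$ are all non-negative. Combined with $S^*(k,j)\ge 0$, this already yields the weak inequalities
\begin{align*}
    \overline{M}_{2k}(n) - \overline{N}_{2k}(n) \;\ge\; 2\,\sptBar{}{n},
    \qquad
    \overline{M2}_{2k}(n) - \overline{N2}_{2k}(n) \;\ge\; 2\,\sptBarTwo{}{n},
    \qquad
    M2_{2k}(n) - N2_{2k}(n) \;\ge\; 2\,\Mspt{}{n},
\end{align*}
where the factor $2$ is $(2\cdot 1)!\,S^*(k,1)$.

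To upgrade to strict inequality, it therefore suffices to show that the three base smallest parts functions are strictly positive in the claimed ranges. For $\sptBar{}{n}$ with $n\ge 1$ the single-part overpartition $n$ (not overlined) contributes, so $\sptBar{}{n}\ge 1$; alternatively one reads this from the $k=1$ generating function, whose leading summand $q(-q^2;q)_\infty/((1-q)^2(q^2;q)_\infty)$ has a positive coefficient at every $q^n$, $n\ge 1$. For $\sptBarTwo{}{n}$ and $\Mspt{}{n}$ one must exclude $n\in\{1,3\}$: for $n=2$ the partition/overpartition $2$ itself contributes, while for $n\ge 4$ a quick case split (even $n$: use $2+2+\cdots+2$; odd $n\ge 5$: use $\overline{n-2}+2$ in the overpartition case and $(n-2)+2$ in the distinct-odd case) produces at least one valid partition with the required smallest-part-even property. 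The one small point to be careful about is that $n=1$ and $n=3$ admit no partition of the relevant type, which is exactly why these cases are omitted from the statement.

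The only mildly non-routine step is verifying strict positivity of the base smallest parts functions at every admissible $n$. Everything else is bookkeeping via the $S^*(k,j)$ relations and the non-negativity of the generating function expansions already in hand.
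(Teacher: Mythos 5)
Your proof is correct and follows essentially the same route as the paper: decompose $M_{2k}-N_{2k}$ via the $S^*(k,j)$ relations, discard the non-negative $j\ge 2$ terms (using $S^*(k,j)\ge 0$ and the non-negativity of the generating functions), and reduce everything to strict positivity of the $j=1$ symmetrized difference on the stated ranges of $n$. The only (harmless) difference is that for $\sptBarTwo{}{n}$ and $\Mspt{}{n}$ you certify that positivity by exhibiting explicit partitions, which leans on the combinatorial interpretation established only in Section 3, whereas the paper reads it directly off the leading summand of the series in Corollaries \ref{CorollaryForM2Spt}--\ref{CorollaryForSptBar2} (as you yourself do for $\sptBar{}{n}$), keeping the argument self-contained at that point.
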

\begin{proof}
We know
\begin{align*}
	\sum_{n\ge 1} (\mu 2_{2j}(n) - \eta 2_{2j}(n)) q^n
	&=
	\frac{q^{2j}\aqprod{-q^3}{q^2}{\infty}}{(1-q^2)^{2j}\aqprod{q^4}{q^2}{\infty}}
	+\dots
\end{align*}
where the omitted terms also have non-negative coefficients. It is then
apparent that 
\begin{align*}
	\mu 2_{2j}(n) > \eta 2_{2j}(n)
\end{align*}
for $j\ge 1$ and $n \ge 2j+2$. This inequality also holds when $n=2j$, but we
instead have equality at $2j+1$.

However, the $S^*(k,j)$ are integers and are positive for for $1\le j \le k$, thus
\begin{align*}
	M2_{2k}(n) - N2_{2k}(n)
	&=
	\sum_{j=1}^k (2j)! S^*(k,j)(\mu 2_{2j}(n) - \eta 2_{2j}(n))
	\\
	&\ge
	\mu 2_{2}(n) - \eta 2_{2}(n)
	\\
	&> 0 
,
\end{align*}
for $n\ge 4$ and $n=2$. 

Next we have
\begin{align*}
	\sum_{n\ge 1} (\overline{\mu}_{2j}(n) - \overline{\eta}_{2j}(n)) q^n
	&=
	\frac{q^{j}\aqprod{-q^2}{q}{\infty}}{(1-q)^{2j}\aqprod{q^2}{q}{\infty}}
	+\dots
\end{align*}
where the omitted terms also have non-negative coefficients. It is then
apparent that 
\begin{align*}
	\overline{\mu}_{2j}(n) > \overline{\eta}_{2j}(n)
\end{align*}
for $j\ge 1$ and $n \ge j$. Similar to the previous case,
\begin{align*}
	\overline{M}_{2k}(n) - \overline{N}_{2k}(n)
	&=
	\sum_{j=1}^k (2j)! S^*(k,j)(\overline{\mu}_{2j}(n) - \overline{\eta}_{2j}(n))
	\\
	&\ge
	\overline{\mu}_{2}(n) - \overline{\eta}_{2}(n)
	\\
	&> 0 
,
\end{align*}
for $n\ge 1$. 

Last we have
\begin{align*}
	\sum_{n\ge 1} (\overline{\mu 2}_{2j}(n) - \overline{\eta 2}_{2j}(n)) q^n
	&=
	\frac{q^{2j}\aqprod{-q^3}{q}{\infty}}{(1-q^2)^{2j}\aqprod{q^3}{q}{\infty}}
	+\dots
\end{align*}
where the omitted terms also have non-negative coefficients. Thus
\begin{align*}
	\overline{\mu 2}_{2j}(n) > \overline{\eta 2}_{2j}(n)
\end{align*}
for $j\ge 1$ and $n \ge 2j+2$. This inequality also holds when $n=2j$, but we
instead have equality at $2j+1$. As before
\begin{align*}
	\overline{M2}_{2k}(n) - \overline{N2}_{2k}(n)
	&=
	\sum_{j=1}^k (2j)! S^*(k,j)(\overline{\mu 2}_{2j}(n) - \overline{\eta 2}_{2j}(n))
	\\
	&\ge
	\overline{\mu 2}_{2}(n) - \overline{\eta 2}_{2}(n)
	\\
	&> 0 
,
\end{align*}
for $n\ge 4$ and $n=2$. 
\end{proof}

\section{Combinatorial Interpretations}
As in \cite{Garvan2}, for a partition $\pi$ where the different parts are
\begin{align*}
	n_1 < n_2 < \dots < n_m,
\end{align*} 
we have $f_j = f_j(\pi)$ is the frequency of the part $n_j$. 

Thinking of overpartition and partitions without repeated odd parts as
pairs of partitions, we make the following definition. Suppose 
$\vec{\pi} = (\pi_1,\dots,\pi_r)$ is
a vector partition of $n$, then $f^1_j = f^1_j(\vec{\pi}) = f_j(\pi_1)$.
We now view overpartitions as partition pairs where $\pi_2$ is a partition 
into distinct parts, and view partitions with distinct odd parts as partition
pairs where $\pi_1$ has only even parts and $\pi_2$ has only distinct odd parts.
For overpartitions with smallest part even, we use a slightly different idea. 
We view an overpartition with smallest part even as a vector partition
$\vec{\pi} = (\pi_1,\pi_2,\pi_3)$ where $\pi_1$ are the non-overlined even parts, 
$\pi_2$ are the non-overlined odd parts, and $\pi_3$ are the overlined parts.
Furthermore, in all three cases we require the smallest part to only occur in $\pi_1$.
We denote the set of overpartitions with smallest part not overlined by $\SB$, 
the set of partitions with smallest part even and non-repeated odds by
$\STwo$, and the set of overpartitions with smallest part even and not
overlined by $\SBTwo$.

We note that
\begin{align*}
	\Mspt{}{n} &= \sum_{\vec{\pi}\in\STwo, |\vec{\pi}|=n  } f_1^1(\vec{\pi}),
	\\
	\sptBar{}{n} &= \sum_{\vec{\pi}\in\SB, |\vec{\pi}|=n  } f_1^1(\vec{\pi}),
	\\
	\sptBarTwo{}{n} &= \sum_{\vec{\pi}\in\SBTwo, |\vec{\pi}|=n  } f_1^1(\vec{\pi}).
\end{align*}

For $k\ge 1$ we extend the weight $\omega_k$ of \cite{Garvan2}, 
for a partition pair $\vec{\pi} = (\pi_1,\pi_2)$ or vector partition
$\vec{\pi} = (\pi_1,\pi_2,\pi_3)$
we let 
$\omega_k(\vec{\pi}) = \omega_k(\pi_1)$. That is,
\begin{align*}
	\omega_k(\vec{\pi})
	&=
	\sum_{\substack{m_1+m_2+\dots+m_r = k\\ 1\le r \le k} }
	\Bin{f^1_1+m_1-1}{2m_1-1}
	\\&\qquad\times
	\sum_{2\le j_2 < j_3 < \dots < j_r} 
	\Bin{f^1_{j_2}+m_2}{2m_2}\Bin{f^1_{j_3}+m_3}{2m_3}\dots\Bin{f^1_{j_r}+m_r}{2m_r}
.
\end{align*}

\begin{theorem}
For all $k\ge 1$ and $n\ge 1$ we have
\begin{align*}
	\Mspt{k}{n} &= \sum_{\vec{\pi}\in\STwo} \omega_k(\vec{\pi}),
	\\
	\sptBar{k}{n} &= \sum_{\vec{\pi}\in\SB} \omega_k(\vec{\pi}),
	\\
	\sptBarTwo{k}{n} &= \sum_{\vec{\pi}\in\SBTwo} \omega_k(\vec{\pi})
.
\end{align*}
\end{theorem}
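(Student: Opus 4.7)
The plan is to extend the combinatorial argument Garvan used in \cite[Section 5]{Garvan2} for the ordinary smallest parts case, adapted to vector partitions. Because $\omega_k(\vec\pi) = \omega_k(\pi_1)$ depends only on the first component, and every other part of $\vec\pi$ is strictly greater than the smallest part of $\pi_1$, the sum factors as
\[
\sum_{\vec\pi \in \mathcal{S}} \omega_k(\vec\pi)\, q^{|\vec\pi|}
= \sum_{\pi_1} \omega_k(\pi_1)\, q^{|\pi_1|} \cdot G_{\mathcal{S}}\!\left(n_1(\pi_1), q\right),
\]
where $\pi_1$ ranges over the allowed first components (ordinary partitions for $\SB$; partitions into even parts for $\STwo$ and $\SBTwo$) and $G_{\mathcal{S}}(n_1, q)$ is the generating function for the auxiliary components with all parts exceeding $n_1$.

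The key step is an $n_1$-tracking refinement of Garvan's weighted-partition identity: for any function $g$ and $\pi$ ranging over ordinary partitions,
\[
\sum_{\pi} \omega_k(\pi) \, g\!\left(n_1(\pi)\right) q^{|\pi|}
= \sum_{n_k \geq n_{k-1} \geq \cdots \geq n_1 \geq 1}
\frac{q^{n_1 + \cdots + n_k}\, g(n_1)}{(1-q^{n_1})^2 \cdots (1-q^{n_k})^2 \, \aqprod{q^{n_1+1}}{q}{\infty}}.
\]
To prove this I would expand $\omega_k(\pi)$ by its definition and apply the elementary identities
\[
\sum_{f \geq 0} \binom{f+m}{2m} x^f = \frac{x^m}{(1-x)^{2m+1}}, \qquad \sum_{f \geq 1} \binom{f+m-1}{2m-1} x^f = \frac{x^m}{(1-x)^{2m}}
\]
to sum over the frequency of each distinct part of $\pi$: the smallest distinct part contributes $q^{n_1 m_1}/(1-q^{n_1})^{2m_1}$, each higher marked distinct part $v$ contributes $q^{v m}/(1-q^{v})^{2m+1}$, and every unmarked part value $v > n_1$ contributes $(1-q^v)^{-1}$. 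Pulling out one $(1-q^v)^{-1}$ for every $v > n_1$ produces the global factor $\aqprod{q^{n_1+1}}{q}{\infty}^{-1}$, and the remaining data $(n_1; v_2 < \cdots < v_r)$ with multiplicities $(m_1, \ldots, m_r)$ reconstructs the distinct-value decomposition of the weakly decreasing sequence $n_k \geq \cdots \geq n_1$.

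With this identity in hand, the three cases reduce to a choice of $g$. For $\SB$ the auxiliary $\pi_2$ is a partition into distinct parts $> n_1$, so $g(n_1) = \aqprod{-q^{n_1+1}}{q}{\infty}$; substituting into the identity yields exactly the right-hand side of Corollary \ref{CorollaryForSptBar}. For $\STwo$ and $\SBTwo$ the partition $\pi_1$ has only even parts, so I would apply the identity after the substitution $q \mapsto q^2$ (i.e., halving the parts of $\pi_1$), and take $g(n_1) = \aqprod{-q^{2n_1+1}}{q^2}{\infty}$ in the $\STwo$ case and $g(n_1) = \aqprod{-q^{2n_1+1}}{q}{\infty}/\aqprod{q^{2n_1+1}}{q^2}{\infty}$ in the $\SBTwo$ case. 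The splitting $\aqprod{q^{2n_1+1}}{q^2}{\infty}\, \aqprod{q^{2n_1+2}}{q^2}{\infty} = \aqprod{q^{2n_1+1}}{q}{\infty}$ then rewrites the result into the forms appearing in Corollaries \ref{CorollaryForM2Spt} and \ref{CorollaryForSptBar2}.

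The main obstacle is the bookkeeping of how the smallest distinct part $n_1$ appears on both sides of the key identity: one must verify that $n_1(\pi_1)$ on the combinatorial side really equals the smallest index in the weakly decreasing sum, and that the parity constraints for $\STwo$ and $\SBTwo$ are absorbed entirely into $g(n_1)$ without double-counting the smallest part. Once this bookkeeping is carried through, each of the three assertions follows by direct comparison with the corresponding corollary.
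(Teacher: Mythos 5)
Your proposal is correct and is essentially the paper's own argument run in the opposite direction: the paper starts from the multi-sums in Corollaries \ref{CorollaryForM2Spt}, \ref{CorollaryForSptBar}, and \ref{CorollaryForSptBar2}, splits each over the $2^{k-1}$ compositions of $k$, and applies the same two binomial-coefficient series to recover the frequency sums defining $\omega_k$, which is exactly the content of your ``key identity.'' Your only real addition is packaging that computation as a single lemma with an arbitrary weight $g(n_1)$ attached to the smallest part, which lets the three auxiliary components be handled uniformly --- a tidy modularization, but the same decomposition and the same proof.
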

\begin{proof}
The proof is near identical as that of Theorem 5.6 of \cite{Garvan2},
the only difficulty being how to write out the general case. We will fully write out 
the case when $k=3$ for $\sptBar{k}{n}$, go over
the case of $k=4$ for $\Mspt{k}{n}$,  and 
explain the procedure for general $k$ which will then be clear.

We use
\begin{align*}
	\sum_{n=j}^\infty\Bin{n+j-1}{2j-1}x^n 
	&= 
	\frac{x^j}{(1-x)^{2j}}
	,\\	
	\sum_{n=j}^\infty\Bin{n+j}{2j}x^n 
	&= 
	\frac{x^j}{(1-x)^{2j+1}}
.
\end{align*}

For the $k=3$ case for $\sptBar{k}{n}$, we have
\begin{align*}
	&\sum_{n=1}^\infty (\overline{\mu}_6(n)-\overline{\eta}_6(n))q^n
	\\
	&=
	\sum_{1\le m\le k\le n} \frac{q^{m+k+n}\aqprod{-q^{m+1}}{q}{\infty}}
		{(1-q^m)^2(1-q^k)^2(1-q^n)^2\aqprod{q^{m+1}}{q}{\infty}}
	\\
	&=
	\sum_{1\le m=k=n}
	+\sum_{1\le m=k<n} 
	+\sum_{1\le m<k=n}
		\\&\quad
		+\sum_{1\le m<k<n} 
		\Parans{	
		\frac{q^{m+k+n}\aqprod{-q^{m+1}}{q}{\infty}}
		{(1-q^m)^2(1-q^k)^2(1-q^n)^2\aqprod{q^{m+1}}{q}{\infty}}
		}
	\\
	&=
		\sum_{1\le m}\frac{q^{3m}}{(1-q^m)^6}
		\aqprod{-q^{m+1}}{q}{\infty}
		\prod_{i>m}\frac{1}{1-q^i}
		+
		\\&\quad
		\sum_{1\le m<n} 
		\frac{q^{2m}}{(1-q^m)^4}\frac{q^{n}}{(1-q^n)^3}
		\aqprod{-q^{m+1}}{q}{\infty}
		\prod_{\substack{i>m\\i\not=n}}\frac{1}{1-q^i}
		\\&\quad
		+
		\sum_{1\le m<k}
		\frac{q^{m}}{(1-q^m)^2}\frac{q^{2k}}{(1-q^k)^5}
		\aqprod{-q^{m+1}}{q}{\infty}
		\prod_{\substack{i>m\\i\not=k}}\frac{1}{1-q^i}
		\\&\quad
		+
		\sum_{1\le m<k<n} 	
		\frac{q^{m}}{(1-q^m)^2}\frac{q^{k}}{(1-q^k)^3}\frac{q^{n}}{(1-q^n)^3}
		\aqprod{-q^{m+1}}{q}{\infty}
		\prod_{\substack{i>m\\i\not=k,n}}\frac{1}{1-q^i}
	\\
	&=
		\sum_{1\le m}
		\sum_{f_1=3}^\infty \Bin{f_1+3-1}{6-1} q^{mf_1}
		\aqprod{-q^{m+1}}{q}{\infty}
		\prod_{i>m}\frac{1}{1-q^i}
		\\&\quad
		+
		\sum_{1\le m<n} 
		\sum_{f_1=2}^\infty \Bin{f_1+2-1}{4-1} q^{mf_1}
		\sum_{f_{j_2}=1}^\infty \Bin{f_{j_2}+1}{2} q^{nf_{j_2}}
		\aqprod{-q^{m+1}}{q}{\infty}
		\prod_{\substack{i>m\\i\not=n}}\frac{1}{1-q^i}
		\\&\quad
		+
		\sum_{1\le m<k}
		\sum_{f_1=1}^\infty \Bin{f_1+1-1}{2-1} q^{mf_1}
		\sum_{f_{j_2}=2}^\infty \Bin{f_{j_2}+2}{4} q^{kf_{j_2}}
		\aqprod{-q^{m+1}}{q}{\infty}
		\prod_{\substack{i>m\\i\not=k}}\frac{1}{1-q^i}
		\\&\quad
		+
		\sum_{1\le m<k<n} 	
		\sum_{f_1=1}^\infty \Bin{f_1+1-1}{2-1} q^{mf_1}
		\sum_{f_{j_2}=1}^\infty \Bin{f_{j_2}+1}{2} q^{kf_{j_2}}
		\sum_{f_{j_3}=1}^\infty \Bin{f_{j_3}+1}{2} q^{nf_{j_3}}
			\\&\qquad\qquad\times
			\aqprod{-q^{m+1}}{q}{\infty}
			\prod_{\substack{i>m\\i\not=k,n}}\frac{1}{1-q^i}
.
\end{align*}
The set of the $4$ compositions of $3$ is
$A=\{(3),(2,1),(1,2),(1,1,1)\}$, thus we have
\begin{align*}
	&\sum_{n=1}^\infty (\overline{\mu}_6(n)-\overline{\eta}_6(n))q^n
	\\
	&=
	\sum_{(m_1,\dots,m_r)=\vec{m}\in A}
	\sum_{1\le n_1 < n_{j_2} < \dots < n_{j_r}}	
	\sum_{f_1=m_1}^\infty\sum_{f_{j_2}=m_2}^\infty\dots\sum_{f_{j_r}=m_r}^\infty
	\Bin{f_1+m_1-1}{2m_1-1}
	\nonumber\\& \qquad \times
			\Bin{f_{j_2}+m_2}{2m_2}\dots\Bin{f_{j_r}+m_r}{2m_r}
			q^{n_1f_1 + n_{j_2}f_{j_2} +\dots + n_{j_r}f_{j_r}}
			\aqprod{-q^{n_1+1}}{q}{\infty}
			\nonumber\\& \qquad \times
			\prod_{\substack{i>n_1\\i\not\in\{n_{j_2},\dots,n_{j_r} \}} }
			\frac{1}{1-q^{i}}
.
\end{align*}
This we recognize as the generating function for partition pairs 
$\vec{\pi}=(\pi_1,\pi_2)\in\SB$ counted according to the weight $\omega_3$. This is the
generating function obtained by summing according to the smallest part
of $\pi_1$ being $n_1$ with frequency $f_1$.

For the $k=4$ case for $\Mspt{k}{n}$, we have
\begin{align*}
	&\sum_{n=1}^\infty (\mu 2_8(n) - \eta 2_8(n))q^n
	\\
	&=
	\sum_{1\le m \le j \le k \le n}
	\frac{q^{2m+2j+2k+2n}\aqprod{-q^{2m+1}}{q^2}{\infty}}
	{(1-q^{2m})^2(1-q^{2j})^2(1-q^{2k})^2(1-q^{2n})^2\aqprod{q^{2m+2}}{q^2}{\infty}}	
	\\
	&=
		\sum_{1\le m = j = k = n}
		+\sum_{1\le m = j = k < n}
		+\sum_{1\le m = j < k = n}
		+\sum_{1\le m = j < k < n}
		+\sum_{1\le m < j = k = n}
		+\sum_{1\le m < j = k < n}
		\\&\quad
		+\sum_{1\le m < j < k = n}
		+\sum_{1\le m < j < k < n}
		\frac{q^{2m+2j+2k+2n}\aqprod{-q^{2m+1}}{q^2}{\infty}}
		{(1-q^{2m})^2(1-q^{2j})^2(1-q^{2k})^2(1-q^{2n})^2\aqprod{q^{2m+2}}{q^2}{\infty}}		
	\\
	&=
		\sum_{1\le m} \frac{q^{8m}}{(1-q^{2m})^8}
			\aqprod{-q^{2m+1}}{q^2}{\infty}
			\prod_{i>m}\frac{1}{1-q^{2i}}		
		\\&\quad
		+\sum_{1\le m < n}\frac{q^{6m}}{(1-q^{2m})^6}\frac{q^{2n}}{(1-q^{2n})^3}
			\aqprod{-q^{2m+1}}{q^2}{\infty}
			\prod_{\substack{i>m\\i\not=n}}\frac{1}{1-q^{2i}}
		\\&\quad
		+\sum_{1\le m < k}\frac{q^{4m}}{(1-q^{2m})^4}\frac{q^{4k}}{(1-q^{2k})^5}
			\aqprod{-q^{2m+1}}{q^2}{\infty}
			\prod_{\substack{i>m\\i\not=k}}\frac{1}{1-q^{2i}}
		\\&\quad
		+\sum_{1\le m < k < n}\frac{q^{4m}}{(1-q^{2m})^4}\frac{q^{2k}}{(1-q^{2k})^3}
			\frac{q^{2n}}{(1-q^{2n})^3}
			\aqprod{-q^{2m+1}}{q^2}{\infty}
			\prod_{\substack{i>m\\i\not=k,n}}\frac{1}{1-q^{2i}}
		\\&\quad
		+\sum_{1\le m < j}\frac{q^{2m}}{(1-q^{2m})^2}\frac{q^{6j}}{(1-q^{2j})^7}
			\aqprod{-q^{2m+1}}{q^2}{\infty}
			\prod_{\substack{i>m\\i\not=j}}\frac{1}{1-q^{2i}}
		\\&\quad
		+\sum_{1\le m < j < n}\frac{q^{2m}}{(1-q^{2m})^2}\frac{q^{4j}}{(1-q^{2j})^5}
			\frac{q^{2n}}{(1-q^{2n})^3}
			\aqprod{-q^{2m+1}}{q^2}{\infty}
			\prod_{\substack{i>m\\i\not=j,n}}\frac{1}{1-q^{2i}}
		\\&\quad
		+\sum_{1\le m < j < k}\frac{q^{2m}}{(1-q^{2m})^2}\frac{q^{2j}}{(1-q^{2j})^3}
			\frac{q^{4k}}{(1-q^{2k})^5}
			\aqprod{-q^{2m+1}}{q^2}{\infty}
			\prod_{\substack{i>m\\i\not=j,k}}\frac{1}{1-q^{2i}}
		\\&\quad
		+\sum_{1\le m < j < k < n}\frac{q^{2m}}{(1-q^{2m})^2}\frac{q^{2j}}{(1-q^{2j})^3}
			\frac{q^{2k}}{(1-q^{2k})^3}\frac{q^{2n}}{(1-q^{2n})^3}
			\aqprod{-q^{2m+1}}{q^2}{\infty}
			\prod_{\substack{i>m\\i\not=j,k,n}}\frac{1}{1-q^{2i}}
.
\end{align*}
In order, the above eight terms correspond to the compositions of $4$:
$(4)$, $(3,1)$, $(2,2)$, $(2,1,1)$, $(1,3)$, $(1,2,1)$, $(1,1,2)$, 
$(1,1,1,1)$.

Thus for each composition $m_1+\dots+m_r = 4$ we have a sum of the form:
\begin{align}\label{GeneralTermK4}
	&\sum_{1\le n_1 < n_{j_2} < \dots < n_{j_r}}
	\frac{q^{2n_1m_1}}{(1-q^{2n_1})^{2m_1}}
	\frac{q^{2n_2m_2}}{(1-q^{2n_2})^{2m_2+1}}
	\dots
	\frac{q^{2n_{j_r}m_r}}{(1-q^{2n_{j_r}})^{2m_r+1}}
	\aqprod{-q^{2n_1+1}}{q^2}{\infty}
		\\&\quad\times
		\prod_{\substack{i>n_1\\i\not\in\{n_{j_2},\dots,n_{j_r} \}} }
			\frac{1}{1-q^{2i}}
	\nonumber\\	
	&=
		\sum_{1\le n_1 < n_{j_2} < \dots < n_{j_r}}	
		\sum_{f_1=m_1}^\infty\sum_{f_{j_2}=m_2}^\infty\dots\sum_{f_{j_r}=m_r}^\infty
		\Bin{f_1+m_1-1}{2m_1-1}
		\nonumber\\& \qquad \times
			\Bin{f_{j_2}+m_2}{2m_2}\dots\Bin{f_{j_r}+m_r}{2m_r}
			q^{2n_1f_1 + 2n_{j_2}f_{j_2} +\dots + 2n_{j_r}f_{j_r}}
			\aqprod{-q^{2n_1+1}}{q^2}{\infty}
			\nonumber\\& \qquad \times
			\prod_{\substack{i>n_1\\i\not\in\{n_{j_2},\dots,n_{j_r} \}} }
			\frac{1}{1-q^{2i}}
.
\end{align}
Noting the $f_{j_i}$ correspond to the frequencies of certain even parts, we see 
summing (\ref{GeneralTermK4}) over all compositions of $4$ yields the generating function for 
partitions without repeated odd parts and smallest part even written as a partition pair
$(\pi_1,\pi_2)\in S2$,  counted according
to the weight $\omega_4$. This is the generating function given
by summing according to the smallest part being $2n_1$ with frequency $f_1$.

For general $k$, we take the expression in Corollary \ref{CorollaryForM2Spt},
\ref{CorollaryForSptBar}, or \ref{CorollaryForSptBar2}
and break it into
$2^{k-1}$ sums by turning the index bounds into $=$ or $<$. These correspond to
the $2^{k-1}$ compositions of $k$. The sum with index bounds
$n_1 \square_1 n_2 \square_2 \dots \square_{r-1} n_r$ where each $\square_i$ is
either ``$=$'' or ``$<$'' corresponds to the composition
$\allowbreak{(1 \triangle_1 1 \triangle_2 \dots \triangle_{r-1} 1)}$
where $\triangle_i$ is ``$+$'' if $\square_i$ is ``$=$'' and $\triangle_i$ is ``$,$'' if 
$\square_i$ is ``$<$''.

For $\Mspt{k}{n}$ the sum corresponding to the fixed composition
$m_1+m_2+\dots+m_r = k$ is then rewritten as
\begin{align}\label{GeneralTerm}
	&\sum_{1\le n_1 < n_{j_2} < \dots < n_{j_r}}	
	\sum_{f_1=m_1}^\infty\sum_{f_{j_2}=m_2}^\infty\dots\sum_{f_{j_r}=m_r}^\infty
	\Bin{f_1+m_1-1}{2m_1-1}\Bin{f_{j_2}+m_2}{2m_2}\dots\Bin{f_{j_r}+m_r}{2m_r}
	\nonumber\\& \qquad\qquad \times
	q^{2n_1f_1 + 2n_{j_2}f_{j_2} +\dots + 2n_{j_r}f_{j_r}}
	\aqprod{-q^{2n_1+1}}{q^2}{\infty}
	\prod_{\substack{i>n_1\\i\not\in\{n_{j_2},\dots,n_{j_r} \}} }
		\frac{1}{1-q^{2i}}
.
\end{align}
Thus on the one hand summing (\ref{GeneralTerm}) over all compositions of $k$
gives $\sum_{n=1}^\infty (\mu 2_{2k}(n) - \eta 2_{2k}(n))q^n$, 
but also this is $\sum_{n=1}^\infty q^n\sum_{\vec{\pi}\in\STwo}\omega_k(\vec{\pi})$.
 
For $\sptBar{k}{n}$, the general case follows the same idea, but differs in that
the term for a fixed composition $m_1+\dots+m_r$ of $k$ is
\begin{align*}
	&\sum_{1\le n_1 < n_{j_2} < \dots < n_{j_r}}	
	\sum_{f_1=m_1}^\infty\sum_{f_{j_2}=m_2}^\infty\dots\sum_{f_{j_r}=m_r}^\infty
	\Bin{f_1+m_1-1}{2m_1-1}\Bin{f_{j_2}+m_2}{2m_2}\dots\Bin{f_{j_r}+m_r}{2m_r}
	\nonumber\\& \qquad\qquad \times
			q^{n_1f_1 + n_{j_2}f_{j_2} +\dots + n_{j_r}f_{j_r}}
			\aqprod{-q^{n_1+1}}{q}{\infty}
			\prod_{\substack{i>n_1\\i\not\in\{n_{j_2},\dots,n_{j_r} \}} }
			\frac{1}{1-q^{i}}
.
\end{align*}

Lastly, for $\sptBarTwo{k}{n}$ the general term is
\begin{align*}
	&
	\sum_{1\le n_1 < n_{j_2} < \dots < n_{j_r}}	
	\sum_{f_1=m_1}^\infty\sum_{f_{j_2}=m_2}^\infty\dots\sum_{f_{j_r}=m_r}^\infty
	\Bin{f_1+m_1-1}{2m_1-1}\Bin{f_{j_2}+m_2}{2m_2}\dots\Bin{f_{j_r}+m_r}{2m_r}
	\nonumber\\& \qquad\qquad \times
			q^{2n_1f_1 + 2n_{j_2}f_{j_2} +\dots + 2n_{j_r}f_{j_r}}
			\aqprod{-q^{2n_1+1}}{q}{\infty}
			\prod_{\substack{i>2n_1\\i\not\in\{2n_{j_2},\dots,2n_{j_r} \}} }
			\frac{1}{1-q^{i}}
.
\end{align*}
This finishes the proof.
\end{proof}

To help illustrate what is being counted, we give the details to show
$\sptBar{2}{4}=16$. Using the overpartitions of $4$ listed in the introduction,
we see the partition pairs of $4$ from $\SB$ are
$(4,\emptyset)$, $(3+1,\emptyset)$, $(1,3)$, $(2+2,\emptyset)$, 
$(2+1+1,\emptyset)$, $(1+1,2)$, and $(1+1+1+1,\emptyset)$. Since the 
compositions of $2$ are just $2$ and $1+1$ and for these partition
pairs we have $f^1_j=0$ for $j>2$, the weight $\omega_2$ reduces to
\begin{align*}
	\omega_2(\vec{\pi}) &=
	\Bin{f_1^1(\vec{\pi})+1}{3} 
	+ f_1^1(\vec{\pi})\Bin{f_2^1(\vec{\pi})+1}{2}
.
\end{align*}
For larger values of $n$, $\sptBar{2}{n}$ and $\omega_2(\vec{\pi})$ would
be slightly more complicated as there would be partition pairs with 
$f^1_j\not=0$ for $j$ past $1$ and $2$. We collect the information for each 
partition pair in the following table.
$$
\begin{array}{c|c|c|c}
\mbox{$\SB$-partition pair} & f_1^1   & f_1^2  & \omega_2 \\
\hline
(4,\emptyset) 			& 1 & 0 & 0 \\
(3+1,\emptyset) 		& 1 & 1 & 1 \\
(1,3) 					& 1 & 0 & 0 \\
(2+2,\emptyset) 		& 2 & 0 & 1 \\
(2+1+1,\emptyset) 	& 2 & 1 & 3 \\
(1+1,2) 					& 2 & 0 & 1 \\
(1+1+1+1,\emptyset) 	& 4 & 0 & 10
\end{array}
$$

\section{Congruences for $\sptBar{2}{n}$}
It appears that these higher order spt functions satisfy various congruences. 
We prove two of them.

\begin{theorem}
For $n\ge 0$,
\begin{align*}
	\sptBar{2}{5n+1}\equiv 0 \pmod{5}
	,\\
	\sptBar{2}{5n+3}\equiv 0 \pmod{5}
.
\end{align*}
\end{theorem}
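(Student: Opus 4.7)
The plan is to reduce each congruence modulo $5$ to a comparison of residue-class counts for the residual crank and the Dyson rank. Using the formula $\sptBar{2}{n} = \overline{\mu}_4(n) - \overline{\eta}_4(n) = \frac{1}{24}\sum_m g_2(m)\bigl[\overline{M}(m,n) - \overline{N}(m,n)\bigr]$ (with $g_2(m) = m^2(m^2-1)$) from the formulas just before Theorem~\ref{TheoremExpressionsForSymmetric}, a direct check gives $g_2(m) \equiv 2 \pmod 5$ when $m \equiv \pm 2 \pmod 5$ and $g_2(m) \equiv 0 \pmod 5$ otherwise, while $24 \equiv -1 \pmod 5$. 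Writing $\overline{M}_{\pm 2}(n) = \sum_{m \equiv \pm 2 \pmod 5} \overline{M}(m,n)$ and defining $\overline{N}_{\pm 2}(n)$ analogously, we obtain
\[
\sptBar{2}{n} \equiv 3\bigl[\overline{M}_{\pm 2}(n) - \overline{N}_{\pm 2}(n)\bigr] \pmod 5,
\]
so the two congruences reduce to showing $\overline{M}_{\pm 2}(n) \equiv \overline{N}_{\pm 2}(n) \pmod 5$ whenever $n \equiv 1,3 \pmod 5$.

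To access the residue classes I would specialize $z$ to a primitive fifth root of unity $\zeta$ in the bilateral Hecke-type expressions for $\overline{C}(z,q)$ and $\overline{R}(z,q)$ derived at the start of Section~2. By character orthogonality,
\[
\sum_n \bigl[\overline{M}_{\pm 2}(n) - \overline{N}_{\pm 2}(n)\bigr]q^n = \frac{1}{5}\sum_{j=1}^{4}(\zeta^{2j}+\zeta^{-2j})\bigl[\overline{C}(\zeta^j,q) - \overline{R}(\zeta^j,q)\bigr],
\]
with the $j = 0$ term vanishing since $\overline{C}(1,q) = \overline{R}(1,q)$. Each difference $\overline{C}(\zeta^j,q) - \overline{R}(\zeta^j,q)$ can then be put in closed form by exploiting the identity $\aqprod{\zeta q,\zeta^2 q,\zeta^3 q,\zeta^4 q}{q}{\infty} = \aqprod{q^5}{q^5}{\infty}/\aqprod{q}{q}{\infty}$ to combine infinite products, and by invoking Ramanujan's $5$-dissection of $\aqprod{-q}{q}{\infty}/\aqprod{q}{q}{\infty}$ together with $\aqprod{q}{q}{\infty}^{5} \equiv \aqprod{q^5}{q^5}{\infty} \pmod 5$ to simplify the prefactor modulo $5$.

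With the generating function in standard theta form, the final step is a $5$-dissection and a direct check that the coefficients of $q^{5n+1}$ and $q^{5n+3}$ are divisible by $5$. The main obstacle is executing this dissection cleanly at $z = \zeta$, as the Hecke-type sum at a root of unity is delicate. A potentially simpler alternative is to work directly from the expressions in Theorem~\ref{TheoremExpressionsForSymmetric} with $k = 2$, applying $(1-q^n)^{-4} \equiv (1-q^n)(1-q^{5n})^{-1} \pmod 5$ (which follows from $(1-q^n)^5 \equiv 1 - q^{5n} \pmod 5$) to convert the quartic denominators into ones with only $5$-periodic structure; the residual dissection then reduces to identifying which terms in the Hecke-type series hit residues $1$ or $3$ modulo $5$ and verifying their cancellation, which is within reach of standard Jacobi triple product manipulations.
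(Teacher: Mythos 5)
Your opening reduction is correct and worth keeping: since $g_2(m)=m^4-m^2\equiv 1-m^2\pmod 5$ for $m\not\equiv 0$, one indeed gets $g_2(m)\equiv 2\pmod 5$ exactly when $m\equiv\pm2\pmod 5$, and with $24\equiv -1\pmod 5$ the congruence $\sptBar{2}{n}\equiv 0\pmod 5$ is equivalent to $\overline{M}_{\pm2}(n)\equiv\overline{N}_{\pm2}(n)\pmod 5$. This is a legitimate reformulation, and it is a genuinely different route from the paper, which never touches residue classes of the rank and crank: the paper instead quotes three linear relations modulo $5$ among $\overline{N}_4(n)$, $\overline{N}_2(n)$, $\overline{M}_4(n)$, $\overline{M}_2(n)$, and $\overline{M2}_2(n)$ from Bringmann--Lovejoy--Osburn (their equations (3.1) and (4.4) and one from the proof of their Theorem 3.1), substitutes them into $\sptBar{2}{n}=\frac{1}{24}(\overline{M}_4(n)-\overline{M}_2(n)-\overline{N}_4(n)+\overline{N}_2(n))$, and finishes by evaluating the resulting polynomial coefficients at $n\equiv 1,3\pmod 5$. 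That argument is a few lines of arithmetic once the cited identities are granted.

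The gap in your proposal is that everything after the reformulation is a plan rather than a proof. Establishing $\overline{M}_{\pm2}(n)\equiv\overline{N}_{\pm2}(n)\pmod 5$ for $n\equiv1,3\pmod 5$ via $z=\zeta$ requires the full $5$-dissections of $\overline{C}(\zeta,q)$ and, much harder, of $\overline{R}(\zeta,q)$ at a primitive fifth root of unity; the latter is an Atkin--Swinnerton-Dyer--type rank-difference computation (for overpartitions this is itself the content of substantial separate work), not something that follows in a line from $\aqprod{\zeta q,\zeta^2q,\zeta^3q,\zeta^4q}{q}{\infty}=\aqprod{q^5}{q^5}{\infty}/\aqprod{q}{q}{\infty}$ and the triple product. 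Moreover, because of the factor $\tfrac15$ in the orthogonality formula, you must show the relevant coefficients of $\sum_{j=1}^4(\zeta^{2j}+\zeta^{-2j})[\overline{C}(\zeta^j,q)-\overline{R}(\zeta^j,q)]$ are divisible by $25$, not merely by $5$, which raises the bar further. Your ``simpler alternative'' using $(1-q^n)^{-4}\equiv(1-q^n)(1-q^{5n})^{-1}\pmod 5$ is a sound opening move, but the asserted cancellation of the residues $1$ and $3$ in the resulting Hecke-type double sums is exactly the theorem; declaring it ``within reach of standard Jacobi triple product manipulations'' without exhibiting the dissection leaves the proof unfinished, and there is no a priori reason the cancellation holds short of carrying out that computation.
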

\begin{proof}
We have
\begin{align}\label{CogruencesEq1}
	\sptBar{2}{n}
	&=
	\frac{1}{24}\Parans{
		\overline{M}_4(n)-\overline{M}_2(n)-\overline{N}_4(n)+\overline{N}_2(n)}
.
\end{align}
Reducing equation (3.1) of \cite{BLO2} modulo 5 gives
\begin{align*}
	\overline{N}_4(n)
	&\equiv
	(2n+4)\overline{N}_2(n)+(2n+2)\overline{M}_2(n)+\overline{M}_4(n)
	+2n\overline{M2}_2(n)
	\pmod{5}
,
\end{align*}
so (\ref{CogruencesEq1}) becomes
\begin{align}\label{CogruencesEq2}
	\sptBar{2}{n}
	&\equiv
	(3+2n)\overline{M}_2(n)+2n\overline{M2}_2(n)+(3+2n)\overline{N}_2(n)
	\pmod{5}
.
\end{align}

The following are equation (4.4) and an equation out of the proof of 
Theorem 3.1 of \cite{BLO2}:
\begin{align}
	\label{CogruencesEq3}
	(2n^2+n+2)\overline{M}_2(n) + (n^2+4n+2)\overline{M2}_2(n)
	&\equiv 
	0 \pmod{5}
	,\\
	\label{CogruencesEq4}
	(4n^2+n)\overline{M}_2(n) + (4n+4)\overline{M2}_2(n) + (3n^2+2)\overline{N}_2(n)
	&\equiv 
	0 \pmod{5}
.
\end{align}
In (\ref{CogruencesEq3}) we replace $n$ by $5n+1$ and in (\ref{CogruencesEq4}) we
replace $n$ by $5n+3$ to get
\begin{align*}
	\overline{M2}_2(5n+1)
	&\equiv 
	0 \pmod{5}
	,\\
	4\overline{M}_2(5n+3) + \overline{M2}_2(5n+3) + 4\overline{N}_2(5n+3)
	&\equiv 
	0 \pmod{5}
.
\end{align*}
With (\ref{CogruencesEq2}) we then have
\begin{align*}
	\sptBar{2}{5n+1} &\equiv 2\overline{M2}_2(5n+1) \equiv 0\pmod{5}
	,\\
	\sptBar{2}{5n+3} 
	&\equiv 
	4\overline{M}_2(5n+3)
	+4\overline{N}_2(5n+3)
	+\overline{M2}_2(5n+3) \equiv 0\pmod{5}
.
\end{align*}
\end{proof}

Congruences for $\Mspt{2}{n}$ will be handled in a future paper.

\section{Remarks}
In \cite{DixitYee} Dixit and Yee also generalized the spt function to $\mbox{Spt}_j$
and generalized the higher order spt-function
$\mbox{spt}_{k}$ to $_{j}\mbox{spt}_{k}$. They used
\begin{align*}
	\mbox{Spt}_j(n) &= \frac{1}{2} M_2(n) - {\frac{1}{2}} {_{j+1}N_2(n)},
	\\
	_{j}\mbox{spt}_{k}(n) &= _{j}\mu_{2k}(n) - _{j+1}\mu_{2k}(n),
\end{align*}
where
\begin{align*}
	_{j}N_{k}(n) &= \sum_{m\in\mathbb{Z}} m^k N_j(m,n),
	\\
	_{j}\mu_k(n) &= \sum_{m\in\mathbb{Z}} \Bin{m+\Floor{\frac{k-1}{2}}}{k} N_j(m,n),
\end{align*}
and $N_j(m,n)$ is the number of partitions of $n$ with at least
$j-1$ successive Durfee squares whose $j$-rank is $m$. It may be possible to
work out generalizations of this form for the three
spt functions we have investigated here.

It is worth mentioning that it is not $\overline{R2}$ and $\overline{C2}$
that were used in \cite{GarvanJennings} to reprove certain congruences
satisfied by $\sptBarTwo{}{n}$. However, the methods in that paper can
be used with $\overline{R2}$ and $\overline{C2}$ to prove the congruences 
$\sptBarTwo{}{3n}\equiv\sptBarTwo{}{3n+1}\equiv 0 \pmod{3}$. 
Yet those methods do no work
to prove the congruence $\sptBarTwo{}{5n+3}\equiv 0 \pmod{5}$ with
$\overline{R2}$ and $\overline{C2}$.

\section{References}

\bibliographystyle{elsarticle-num}

\begin{thebibliography}{10}

\bibitem{ABL}
S.~Ahlgren, K.~Bringmann, and J.~Lovejoy.
\newblock {$\ell$}-adic properties of smallest parts functions.
\newblock {\em Adv. Math.}, 228(1):629--645, 2011.

\bibitem{Andrews2}
G.~E. Andrews.
\newblock Partitions, {D}urfee symbols, and the {A}tkin-{G}arvan moments of
  ranks.
\newblock {\em Invent. Math.}, 169(1):37--73, 2007.

\bibitem{Andrews}
G.~E. Andrews.
\newblock The number of smallest parts in the partitions of {$n$}.
\newblock {\em J. Reine Angew. Math.}, 624:133--142, 2008.

\bibitem{ACK}
G.~E. Andrews, S.~H. Chan, and B.~Kim.
\newblock The odd moments of ranks and cranks.
\newblock {\em J. Combin. Theory Ser. A}, 120(1):77--91, 2013.

\bibitem{ACKO}
G.~E. Andrews, S.~H. Chan, B.~Kim, and R.~Osburn.
\newblock The first positive rank and crank moments of ranks for
  overpartitions.
\newblock preprint.

\bibitem{AW}
G.~E. Andrews and S.~O. Warnaar.
\newblock The {B}ailey transform and false theta functions.
\newblock {\em Ramanujan J.}, 14(1):173--188, 2007.

\bibitem{AG}
A.~O.~L. Atkin and F.~G. Garvan.
\newblock Relations between the ranks and cranks of partitions.
\newblock {\em Ramanujan J.}, 7(1-3):343--366, 2003.
\newblock Rankin memorial issues.

\bibitem{BG2}
A.~Berkovich and F.~G. Garvan.
\newblock Some observations on {D}yson's new symmetries of partitions.
\newblock {\em J. Combin. Theory Ser. A}, 100(1):61--93, 2002.

\bibitem{BMS}
D.~Bowman, J.~Mc Laughlin, A.V.~ Sills.
\newblock Some more identities of the {R}ogers-{R}amanujan type.
\newblock {\em Ramanujan J.},  18(3):307--325, 2009.

\bibitem{BLO2}
K.~Bringmann, J.~Lovejoy, and R.~Osburn.
\newblock Rank and crank moments for overpartitions.
\newblock {\em J. Number Theory}, 129(7):1758--1772, 2009.

\bibitem{BLO1}
K.~Bringmann, J.~Lovejoy, and R.~Osburn.
\newblock Automorphic properties of generating functions for generalized rank
  moments and {D}urfee symbols.
\newblock {\em Int. Math. Res. Not. IMRN}, (2):238--260, 2010.

\bibitem{BKR2}
K.~Bringmann and K.~Mahlburg.
\newblock Asymptotic inequalities for positive crank and rank moments.
\newblock {\em Trans. Amer. Math. Soc.}, 366(2):1073--1094, 2014.

\bibitem{BKR}
K.~Bringmann, K.~Mahlburg, and R.~C. Rhoades.
\newblock Asymptotics for rank and crank moments.
\newblock {\em Bull. Lond. Math. Soc.}, 43(4):661--672, 2011. 

\bibitem{DixitYee}
A.~Dixit and A.~J. Yee.
\newblock Generalized higher order spt-functions.
\newblock {\em Ramanujan J.}, 31(1-2):191--212, 2013.

\bibitem{Dyson}
F.~J. Dyson.
\newblock Mappings and symmetries of partitions.
\newblock {\em J. Combin. Theory Ser. A}, 51(2):169--180, 1989.

\bibitem{Garvan1}
F.~G. Garvan.
\newblock New combinatorial interpretations of {R}amanujan's partition
  congruences mod {$5,7$} and {$11$}.
\newblock {\em Trans. Amer. Math. Soc.}, 305(1):47--77, 1988.

\bibitem{Garvan2}
F.~G. Garvan.
\newblock Higher order spt-functions.
\newblock {\em Adv. Math.}, 228(1):241--265, 2011.

\bibitem{GarvanJennings}
F.~{Garvan}, C.~{Jennings-Shaffer}.
\newblock {The spt-crank for overpartitions}.
\newblock {\em Acta Arith.}, 166(4):141--188, 2014.

\bibitem{LRS}
A.~Larsen, A.~Rust, and H.~Swisher.
\newblock {Inequalities for positive rank and crank moments of overpartitions}.
\newblock {\em Int. J. Number Theory}, 10(8):2115--2133, 2014.


\bibitem{Lovejoy1}
J.~Lovejoy.
\newblock Rank and conjugation for the {F}robenius representation of an
  overpartition.
\newblock {\em Ann. Comb.}, 9(3):321--334, 2005.

\bibitem{Lovejoy3}
J.~Lovejoy.
\newblock Rank and conjugation for a second {F}robenius representation of an
  overpartition.
\newblock {\em Ann. Comb.}, 12(1):101--113, 2008.

\bibitem{LO2}
J.~Lovejoy and R.~Osburn.
\newblock {$M_2$}-rank differences for partitions without repeated odd parts.
\newblock {\em J. Th\'eor. Nombres Bordeaux}, 21(2):313--334, 2009.

\bibitem{Mao}
R.~Mao.
\newblock Inequalities between rank moments of overpartitions.
\newblock {\em J. Number Theory}, 133(11):3611--3619, 2013.

\bibitem{Mao2}
R.~Mao.
\newblock Asymptotics for rank moments of overpartitions
\newblock {\em Int. J. Number Theory}, 2014.


\bibitem{Slater}
L.~J. Slater.
\newblock A new proof of {R}ogers's transformations of infinite series.
\newblock {\em Proc. London Math. Soc. (2)}, 53:460--475, 1951.

\bibitem{Zapata}
J.~M. {Zapata Rolon}.
\newblock {Asymptotics of higher order spt-functions for overpartitions}.
\newblock {\em ArXiv e-prints}, Feb. 2014.

\end{thebibliography}

\end{document}